\documentclass[11pt,reqno]{amsart}
\usepackage[T1]{fontenc}
\usepackage{lmodern}
\usepackage{amsmath,amssymb,mathtools}
\usepackage{microtype}
\usepackage{graphicx}
\usepackage{enumitem}
\usepackage[a4paper,margin=28mm]{geometry}
\usepackage{xcolor}
\definecolor{linkblue}{RGB}{0,0,255}
\usepackage[colorlinks=true,linkcolor=linkblue,citecolor=linkblue,urlcolor=linkblue]{hyperref}
\numberwithin{equation}{section}
\newtheorem{theorem}{Theorem}[section]
\newtheorem{lemma}[theorem]{Lemma}
\newtheorem{proposition}[theorem]{Proposition}
\newtheorem{corollary}[theorem]{Corollary}
\theoremstyle{definition}
\newtheorem{definition}[theorem]{Definition}
\theoremstyle{remark}
\newtheorem{remark}[theorem]{Remark}
\newcommand{\R}{\mathbb R}
\newcommand{\E}{\mathcal E}
\newcommand{\Tmax}{T_{\max}}
\newcommand{\norm}[1]{\lVert #1\rVert}
\newcommand{\pair}[2]{\langle #1,#2\rangle}
\DeclareMathOperator*{\essinf}{ess\,inf}
\DeclareMathOperator*{\esssup}{ess\,sup}
\setlist[enumerate]{leftmargin=*,label=\textup{(\roman*)}}
\allowdisplaybreaks[1]

\title[Blow-up for a wave equation with a variable logarithmic source]{Blow-up at arbitrary
energy levels for a strongly damped viscoelastic wave equation
with a variable-exponent logarithmic source}
\author[M.  Liao]{Menglan Liao}
\address{School of Mathematics, Hohai University, Nanjing, 210098, China \&
Laboratory of Mathematical Modeling and Intelligent Computing for Water Systems, Hohai University, Nanjing, 211100, China}
 \email{liaoml@hhu.edu.cn}
 
\author[M. Zhang]{Mingxue Zhang}
\address{Institute for Math and AI, Wuhan University, Wuhan, 430072, China. }
 \email{zhangmingxuex@163.com}
 
\thanks{This work was supported by the National Natural Science Foundation of China (12401290) and the Natural Science Foundation of Jiangsu Province (BK20230946). }

\subjclass[2020]{35L20, 35B44, 35B45}
\keywords{Viscoelastic wave equation, strong damping, variable-exponent logarithmic source, high initial energy, finite-time blow-up, lifespan estimate}
\date{}

\begin{document}
\begin{abstract}
We study a strongly damped viscoelastic wave equation
with a logarithmic source whose measurable exponent
satisfies $2<p_-\le p(x) \le p_+<2^*$.
An optimal pointwise correction to the source--potential
inequality defines a \emph{shifted energy}.
Under suitable kernel conditions, we prove finite-time
blow-up for negative shifted energy and for a class of
nonnegative shifted energies with no fixed upper energy
threshold.
Explicit upper lifespan bounds follow from an estimate
for reaching negative shifted energy and a concavity
argument.
We also construct smooth blow-up data at every prescribed
real energy level and show that our lifespan estimate
covers additional data in the constant-exponent case.
Explicit examples are given to illustrate the blow-up results.
\end{abstract}
\maketitle

\section{Introduction}

Let $\Omega\subset\R^n$ with $n\ge 1$ be a bounded domain with smooth boundary.
This paper is concerned with the initial-boundary value problem for a strongly damped viscoelastic wave equation involving variable-exponent logarithmic source terms:
\begin{equation}\label{problem}
\begin{cases}
u_{tt}-\Delta u
+\displaystyle\int_0^t g(t-s)\Delta u(s)\,ds
-\Delta u_t
=|u|^{p(x)-2}u\log|u|,
&(x,t)\in\Omega\times(0,\infty),\\
u=0,
&(x,t)\in\partial\Omega\times(0,\infty),\\
u(0)=u_0,\quad u_t(0)=u_1,
&x\in\Omega.
\end{cases}
\end{equation}
The source is defined to be zero at $u=0$.
The convolution describes memory, and $-\Delta u_t$
provides strong damping. The initial values $(u_0,u_1)\in H_0^1(\Omega)\times L^2(\Omega)$ are prescribed.
The goal of this paper is to obtain
blow-up criteria and explicit upper lifespan bounds
for problem \eqref{problem}, including initial data with
arbitrarily high energy.

The global existence and blow-up of damped wave equations
have been studied extensively.
Gazzola and Squassina~\cite{GazzolaSquassina2006}
investigated semilinear wave equations with weak and strong
damping, established global existence and blow-up results,
and constructed high-energy initial data leading to
finite-time blow-up.
For viscoelastic equations, Messaoudi~\cite{Messaoudi2006}
proved finite-time blow-up for a class of positive-energy
initial data in the presence of nonlinear velocity damping
and a \emph{power source}.
Song and Xue~\cite{SongXue2014} considered a viscoelastic
wave equation with strong damping and a power source
and obtained blow-up results for suitable initial data
with arbitrarily high energy.
The local theory for related viscoelastic equations
with supercritical sources and damping was developed by
Guo et al.~\cite{GuoEtAl2014}, who established
existence, uniqueness, and continuous dependence on the
initial data. Subsequently, Guo, Rammaha, and Sakuntasathien~\cite{GuoRammahaSakuntasathien2017}
established blow-up results for negative initial
energy and for suitable positive-energy data with
sufficiently large initial quadratic energy,
under conditions ensuring that the source dominates
the dissipative effects. These results highlight the interplay between the source,
damping, and memory in determining global existence
and finite-time blow-up, and motivate the study of
more general source terms.

For equations with \emph{logarithmic sources}
and no memory term, Ma and Fang~\cite{MaFang2018}
studied a strongly damped semilinear wave equation
under homogeneous Dirichlet boundary conditions.
Using a family of potential wells, the logarithmic
Sobolev inequality, and a perturbed energy method,
they established sufficient conditions for global
existence and infinite-time blow-up, together with
energy decay estimates.
Related results for strongly damped semilinear
wave equations with logarithmic sources were
obtained in~\cite{DiShangSong,ZuGuo}. For $p$-Laplacian wave equations with weak and
strong damping and logarithmic sources, we refer
to \cite{WuYangCheng2023,YangHan2022} and the
references therein.   For viscoelastic equations with \emph{constant-exponent logarithmic sources}, Ha and Park~\cite{HaPark}
proved local existence and blow-up results for
problem \eqref{problem}. They pointed out that blow-up occurs if the initial energy is controlled below a fixed potential-well depth.
Subsequently, Liao~\cite{Liao} improved this result and obtained a high-energy blow-up criterion
and lifespan estimates.
In the notation of that paper, the positive-energy
criterion reads
\begin{equation}\label{2}
0<E(0)<\frac Cp(u_0,u_1),
\end{equation}
where $(\cdot,\cdot)$ is the $L^2(\Omega)$ inner product
and $C>0$ depends on the exponent, the domain, and the
kernel.
This criterion compares energy with the initial
displacement--velocity pairing rather than with a fixed
potential-well depth.
The upper estimate in \cite[Theorem~3.2]{Liao}
additionally assumes
\begin{equation}\label{3}
E(0)\le\frac{C}{2p}\norm{u_0}_2^2.
\end{equation}

Variable-exponent nonlinearities allow the growth of a nonlinear response to vary with position and thus provide a natural framework for studying spatially heterogeneous problems. An important motivation comes from electrorheological fluids, whose constitutive laws lead to equations with nonstandard growth conditions, see for instance \cite{Ruzicka}. Variable-exponent models have also found important applications in image processing. In particular, Chen, Levine, and Rao \cite{ChenLevineRao2006} proposed and analyzed a variable-exponent functional for image denoising, enhancement, and restoration.  
For evolution equations with nonstandard growth, including
local/global existence, uniqueness, and blow-up, we refer
to \cite{AntontsevShmarev}.
 Le, Le, and Nguyen~\cite{LeLeNguyen2023} studied
viscoelastic equations with strong damping and
\emph{variable-exponent sources}, using the potential well
method to establish decay of global solutions and
finite-time blow-up. High-energy blow-up has also been studied for equations
with variable exponents. For instance, 
Talahmeh, Messaoudi and Alahyane
\cite{TalahmehMessaoudiAlahyane}
considered a viscoelastic equation with a variable power
source  and linear velocity damping.
They proved finite-time blow-up for suitable initial data
with arbitrarily large positive energy under conditions
on the relaxation function $g$.
For blow-up results for equations involving the
variable-exponent damping $|u_t|^{m(x)-2}u_t$
and source $|u|^{p(x)-2}u$, we refer to
\cite{DingZhou2023,LiaoGuoZhu,LiaoTan,MessaoudiTalahmehAlSmail,ParkKang2019}
and references therein. These studies motivate the investigation of
\emph{variable-exponent logarithmic sources}, particularly
in the presence of strong damping and memory.
A natural question is whether the high-energy blow-up
criterion and lifespan estimates established in our
previous work~\cite{Liao} can be extended to this setting. However, this extension is not straightforward, because the spatial variation of the exponent
requires a correction to the source--potential
inequality used in the constant-exponent case.
The present paper addresses this question and also
seeks to obtain an upper lifespan bound under the
high-energy criterion itself, without the additional
initial-data restriction imposed in~\cite{Liao}.


The first issue is the source--potential inequality.
For a fixed exponent $p$, the potential
\[
F_p(r)=\frac{|r|^p}{p}\log|r|-\frac{|r|^p}{p^2}
\]
satisfies
$|r|^p\log|r|-pF_p(r)=|r|^p/p\ge0$.
In the variable-exponent case, set
\[
F(x,r)=\frac{|r|^{p(x)}}{p(x)}\log|r|
-\frac{|r|^{p(x)}}{p(x)^2},
\]
we have
\[
|r|^{p(x)}\log|r|-p_-F(x,r)
=
|r|^{p(x)}
\left[
\frac{p(x)-p_-}{p(x)}\log|r|
+\frac{p_-}{p(x)^2}
\right],
\]
which can be negative for small $|r|$. For almost every fixed $x\in\Omega$, we determine
the smallest nonnegative constant $c_{p_-}(p(x))$,
independent of $r$, such that
\[
|r|^{p(x)}\log|r|-p_-F(x,r)
+c_{p_-}(p(x))\ge0
\quad\text{for all }r\in\mathbb R,
\]
where $|r|^{p(x)}\log|r|$ is understood to be zero
at $r=0$. $C_*=\int_\Omega c_{p_-}(p(x))\,dx$ leads to the \emph{shifted energy}
$\E=E+C_*/p_-$.
The shift preserves the dissipation identity and
restores the inequality needed for the growth and
concavity estimates.

The second issue is quantitative.
The growth argument in \cite[Lemma~2.7]{Liao}
can be adapted to the shifted energy $\E$, but an explicit lifespan
bound still requires control of the time before
the shifted energy becomes negative.
We prove that the solution either has already ceased
to exist by an explicit time $\tau$, or reaches
a fixed negative shifted-energy level before $\tau$.
Up to that level, dissipation controls both the
displacement and its gradient.
We then apply the lifespan estimate for negative
shifted energy to bound the remaining lifespan.
This gives a time bound under the positive
shifted-energy criterion without an additional
condition of the form \eqref{3}.


The main contributions are thus the explicit correction
for the variable-exponent logarithmic source and a quantitative
passage from nonnegative to negative shifted energy.
The classical concavity criterion
provides the supporting framework.
To make the comparison with \cite{Liao} concrete,
we construct constant-exponent data satisfying
\eqref{2} but violating \eqref{3}.
We also construct smooth blow-up data at every
prescribed real energy level. We provide explicit initial data satisfying the three
shifted-energy blow-up criteria for a nonconstant
exponent and an exponential memory kernel.

The paper is organized as follows.
Section~\ref{main} states the assumptions and main results.
Section~\ref{estimates} shows the corrected source-potential inequality and lifespan estimates.
Section~\ref{proofs} proves the blow-up theorems and
constructs the comparison data. Section~\ref{numerical} presents explicit examples
 illustrating the blow-up criteria.

\section{Assumptions and main results}\label{main}
Let us write
\[
V=H_0^1(\Omega),\quad H=L^2(\Omega),\quad \norm{z}_V=\norm{\nabla z}_2.
\]
The dual pairing between $V^*$ and $V$ is denoted by $\pair{\cdot}{\cdot}$. If $\lambda_1>0$ is the first Dirichlet eigenvalue, then
\begin{equation}\label{poincare}
\lambda_1\norm{z}_2^2\le\norm{\nabla z}_2^2\quad \text{for all }z\in V.
\end{equation}
We impose the following assumptions.
\begin{enumerate}[label=\textup{(H\arabic*)}]
\item The exponent $p:\Omega\to\R$ is measurable and
\begin{equation*}
2<p_-:=\essinf_{x\in\Omega}p(x)\le p(x)\le p_+:=\esssup_{x\in\Omega}p(x)<2^*
\end{equation*}
almost everywhere, where $2^*=2n/(n-2)$ if $n\ge3$, and $2^*=\infty$ if $n\le2$.
\item The kernel satisfies
\begin{equation*}
g\in C^1([0,\infty)),\quad g\ge0,\quad g'\le0,\quad
\ell:=1-\int_0^\infty g(s)\,ds>0.
\end{equation*}
\end{enumerate}
Note that no continuity assumption on $p$ is needed, since the principal spatial operators have fixed growth and the source is estimated via fixed-exponent Sobolev embeddings.

\begin{definition}[Energy solution]\label{solution}
For $(u_0,u_1)\in V\times H$, an energy solution on $[0,T]$ satisfies
\begin{equation}\label{regularity}
\begin{gathered}
u\in C([0,T];V),\quad u_t\in C([0,T];H)\cap L^2(0,T;V),\quad 
u_{tt}\in L^2(0,T;V^*)
\end{gathered}
\end{equation}
with $u(0)=u_0,~u_t(0)=u_1,$ and, for every $z\in V$ and almost every $t\in(0,T)$,
\begin{equation*}
\pair{u_{tt}}z+(\nabla u,\nabla z)+(\nabla u_t,\nabla z)
-\int_0^t g(t-s)(\nabla u(s),\nabla z)\,ds
=\int_\Omega |u|^{p(x)-2}u\log|u|z\,dx.
\end{equation*}
\end{definition}

Define
\begin{equation*}
F(x,r)=\frac{|r|^{p(x)}}{p(x)}\log|r|-\frac{|r|^{p(x)}}{p(x)^2},\quad F(x,0)=0,
\end{equation*}
and
\begin{equation*}
(g\circ\nabla u)(t)=\int_0^t g(t-s)\norm{\nabla u(t)-\nabla u(s)}_2^2\,ds.
\end{equation*}
The energy is
\begin{equation}\label{energy}
\begin{split}
E(t):=\frac12\norm{u_t(t)}_2^2
+\frac12\left(1-\int_0^t g(s)\,ds\right)\norm{\nabla u(t)}_2^2+\frac12(g\circ\nabla u)(t)-\int_\Omega F(x,u(t))\,dx.
\end{split}
\end{equation}
For $q\ge p_-$, set
\begin{equation}\label{correction}
c_{p_-}(q):=
\begin{cases}
\displaystyle\frac{q-p_-}{q^2}e^{-\frac{q}{q-p_-}},&q>p_-,\\
0,&q=p_-.
\end{cases}
\end{equation}
To absorb the additive term in the source--potential estimate, we use $C_*:=\int_\Omega c_{p_-}(p(x))\,dx$
and define the \emph{shifted energy}
\begin{equation*}
\E(t)=E(t)+\frac{C_*}{p_-}.
\end{equation*}
The function $c_{p_-}$ is continuous on $[p_-,p_+]$, so
\begin{equation*}
0\le C_*=\int_\Omega c_{p_-}(p(x))\,dx\le |\Omega|\max_{q\in[p_-,p_+]}c_{p_-}(q)<\infty.
\end{equation*}
Since the shift $C_*/p_-$ is independent of time, $\E'=E'$ wherever the derivatives exist.

\begin{theorem}[Local well-posedness]\label{local}
Assume \textup{(H1)--(H2)}. For every $(u_0,u_1)\in V\times H$, problem~\eqref{problem} admits a unique maximal energy solution on $[0,\Tmax)$, where $\Tmax\in(0,\infty]$. The solution depends continuously on the initial data on every compact subinterval of its existence interval. If $\Tmax<\infty$, then
\begin{equation}\label{28}
\limsup_{t\uparrow\Tmax}\left(\norm{u(t)}_V+\norm{u_t(t)}_2\right)=\infty.
\end{equation}
The energy is locally absolutely continuous, and
\begin{equation}\label{ei}
E'(t)=-\norm{\nabla u_t(t)}_2^2-\frac12g(t)\norm{\nabla u(t)}_2^2
+\frac12\int_0^t g'(t-s)\norm{\nabla u(t)-\nabla u(s)}_2^2\,ds
\end{equation}
for almost every $t<\Tmax$.
\end{theorem}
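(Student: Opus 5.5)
We prove the theorem by a fixed-point argument built on the linear strongly damped viscoelastic problem. The source $f(x,u)=|u|^{p(x)-2}u\log|u|$ is treated as a locally Lipschitz map $V\to V^*$.

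\emph{Step 1: linear problem.} Fix $T>0$ and $h\in L^2(0,T;V^*)$, and consider
\[
v_{tt}-\Delta v+\int_0^t g(t-s)\Delta v(s)\,ds-\Delta v_t=h,\qquad v(0)=u_0,\quad v_t(0)=u_1.
\]
A Faedo--Galerkin scheme in the Dirichlet eigenbasis gives a unique solution with the regularity \eqref{regularity}. Testing with $v_t$ and using $\int_0^t g\le 1-\ell$ together with Young's inequality on the memory term gives $\norm{v_t}_2^2+\norm{\nabla v}_2^2+\int_0^t\norm{\nabla v_t}_2^2\le C(T)\bigl(\norm{u_0}_V^2+\norm{u_1}_2^2+\norm{h}_{L^2(V^*)}^2\bigr)$. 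Here the damping term $\int\norm{\nabla v_t}^2$ absorbs the $V^*$ forcing. The Galerkin bounds pass to the limit, and $v_{tt}\in L^2(V^*)$ is read off from the equation. Uniqueness follows from the same estimate applied to the difference. For the energy identity one uses the Lions--Magenes lemma: since $v_t\in L^2(V)$ and $v_{tt}\in L^2(V^*)$, the map $t\mapsto\norm{v_t}_2^2$ is absolutely continuous with derivative $2\pair{v_{tt}}{v_t}$. The standard identity
\[
\int_0^t g(t-s)(\nabla v(s),\nabla v_t)\,ds=-\tfrac12\tfrac{d}{dt}(g\circ\nabla v)+\tfrac12 (g'\circ\nabla v)+\tfrac12\tfrac{d}{dt}\Bigl(\int_0^tg\Bigr)\norm{\nabla v}_2^2-\tfrac12 g(t)\norm{\nabla v}_2^2
\]
then yields the linear version of \eqref{ei}.

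\emph{Step 2: source estimate.} This is the main obstacle, since the logarithm and the variable exponent must be controlled by fixed-exponent embeddings. Fix $\varepsilon>0$ with $p_+-1+\varepsilon<2^*-1$, and also $p_--2-\varepsilon>0$ (possible since $p_->2$). For $|r|\ge1$ we have $|\log|r||\le C_\varepsilon|r|^\varepsilon$, and for $|r|\le1$ we have $|r|^{p(x)-2}|\log|r||\le C|r|^{p_--2-\varepsilon}$. From these, the mean value theorem gives
\[
|f(x,a)-f(x,b)|\le C\bigl(1+|a|^{p_+-2+\varepsilon}+|b|^{p_+-2+\varepsilon}\bigr)|a-b|.
\]
The constant is uniform in $x$ because $\partial_r f=(p(x)-1)|r|^{p(x)-2}\log|r|+|r|^{p(x)-2}$ and $p\in[p_-,p_+]$. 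No continuity of $p$ is used here, only measurability. Hölder's inequality with exponents matched to $H_0^1\hookrightarrow L^{q}$, $q=(p_+-1+\varepsilon+1)\le 2^*$ (any $q$ when $n\le2$), then gives $\norm{f(u)-f(w)}_{V^*}\le C(R)\norm{u-w}_V$ whenever $\norm{u}_V,\norm{w}_V\le R$.

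\emph{Step 3: fixed point and continuation.} On $X_T=\{u\in C([0,T];V):\ u_t\in C([0,T];H)\cap L^2(0,T;V)\}$, take the ball of radius $R$ around the data, with $R^2\approx 2C(\norm{u_0}_V^2+\norm{u_1}_2^2)+1$. Define $\Phi(w)=v$ as the solution of Step 1 with $h=f(w)$. Step 2 gives $\norm{f(w)}_{L^2(V^*)}\le C(R)\sqrt T$, so for small $T$ the map $\Phi$ sends the ball into itself and is a contraction. The contraction estimate comes from the linear estimate applied to $\Phi(w_1)-\Phi(w_2)$. This yields existence and uniqueness on $[0,T]$, where $T$ depends only on a bound for $\norm{u_0}_V+\norm{u_1}_2$. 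Uniqueness on any interval follows from the same difference estimate combined with Gronwall's inequality, and it also gives continuous dependence on compact subintervals. Gluing local solutions gives the maximal solution on $[0,\Tmax)$. If \eqref{28} failed with $\Tmax<\infty$, the norm would stay bounded near $\Tmax$, so the uniform local time would allow extension past $\Tmax$, a contradiction. Finally, the energy identity \eqref{ei} follows from the linear identity with $h=f(u)$. Since $t\mapsto\int_\Omega F(x,u)$ is absolutely continuous with derivative $\pair{f(u)}{u_t}$ (by a chain rule justified by $u_t\in L^2(V)$ and the growth bounds of Step 2 on $F$), the source terms cancel and we obtain \eqref{ei} for a.e. $t$.
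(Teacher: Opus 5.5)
Your proposal is correct and takes the route the paper names but omits: Faedo--Galerkin for the linear strongly damped viscoelastic problem, a locally Lipschitz $V\to V^*$ bound on the source, and Banach's fixed-point theorem. Your uniform-in-$x$ bound on $\partial_r f$ together with the exponent $q=p_++\varepsilon\le 2^*$ is what makes a merely measurable $p$ sufficient. One point should be made explicit: because of the memory term, restarting at $t_0>0$ (needed for gluing and for the continuation argument) adds the known forcing $\int_0^{t_0}g(t-s)\Delta u(s)\,ds$, which is bounded in $V^*$ by $(1-\ell)\sup_{[0,t_0]}\norm{\nabla u}_2$. The uniform local existence time therefore also depends on this history bound, which causes no difficulty when \eqref{28} fails.
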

The proof of Theorem~\ref{local} proceeds directly via Faedo-Galerkin method combined with the Banach fixed-point theorem. We omit the detailed argument here. In particular, for $0\le s\le t<\Tmax$, integrating \eqref{ei} over $[s,t]$ and using $g\ge0$ and $g'\le0$ shows
\begin{equation}\label{dissipation}
\E(t)+\int_s^t\norm{\nabla u_r(r)}_2^2\,dr\le\E(s).
\end{equation}
Here we used that the shift is constant.

For negative shifted energy, the additional kernel condition is
\begin{equation}\label{kw}
\ell\ge\frac1{(p_--1)^2}.
\end{equation}
For nonnegative shifted energy, we require the strict condition
\begin{equation}\label{ks}
\ell>\frac1{(p_--1)^2},
\end{equation}
and define
\begin{equation}\label{29}
C=\min\left\{p_-+2,\frac{2\lambda_1[(p_--1)^2\ell-1]}{2p_-+\lambda_1}\right\}>0.
\end{equation}
For example, $g(t)=a e^{-bt}$ with $a\ge0$ and $b>0$ satisfies \eqref{ks} when $a/b<1-(p_--1)^{-2}$.

\begin{theorem}[Blow-up and upper lifespan bounds]\label{blowup}
Assume \textup{(H1)--(H2)}, and let $u$ be the solution in Theorem~\ref{local}.
\begin{enumerate}
\item \textbf{Negative shifted initial energy.}
Suppose \eqref{kw} holds and $\E(0)<0$. Then $\Tmax<\infty$ and \eqref{28} holds. Put $\beta:=-\E(0)>0$. For every $\sigma>0$ such that
\begin{equation*}
(p_--2)\bigl[(u_0,u_1)+\beta\sigma\bigr]>2\norm{\nabla u_0}_2^2,
\end{equation*}
one has
\begin{equation}\label{30}
\Tmax\le
\frac{2(\norm{u_0}_2^2+\beta\sigma^2)}
{(p_--2)[(u_0,u_1)+\beta\sigma]-2\norm{\nabla u_0}_2^2}.
\end{equation}
In particular, minimizing the right-hand side of \eqref{30} over all admissible $\sigma$ gives
\begin{equation}\label{31}
\Tmax\le
\frac{4\left[\sqrt{d_0^2+(p_--2)^2\beta\norm{u_0}_2^2}-d_0\right]}
{(p_--2)^2\beta},
\end{equation}
where 
\begin{equation*}
d_0:=(p_--2)(u_0,u_1)-2\norm{\nabla u_0}_2^2.
\end{equation*}

\item \textbf{Zero shifted initial energy.}
Suppose \eqref{ks} holds, $\E(0)=0$, and $(u_0,u_1)>0$. Then $\Tmax<\infty$ and \eqref{28} holds. Its upper bound is given below by \eqref{35}, with $L_0:=(u_0,u_1)$.

\item \textbf{Positive shifted initial energy.}
Suppose \eqref{ks} holds and
\begin{equation}\label{he}
0<\E(0)<\frac C{p_-}(u_0,u_1).
\end{equation}
Then $\Tmax<\infty$ and \eqref{28} holds. The upper bound is again given by \eqref{35},
without any additional restriction on the initial energy.
\end{enumerate}
For the bounds in \textup{(ii)} and \textup{(iii)}, define, in both cases,
\begin{equation}\label{32}
L_0:=(u_0,u_1)-\frac{p_-}{C}\E(0)>0,\quad
\beta:=\frac{CL_0}{2p_-}.
\end{equation}
Let $\tau$ be the unique positive solution of
\begin{equation}\label{33}
CL_0\tau^2+
\left[L_0-\frac{2(\E(0)+\beta)}{\lambda_1}\right]\tau
-\norm{u_0}_2^2=0.
\end{equation}
Set
\begin{equation}\label{34}
\begin{split}
U_\tau:=\left(\norm{u_0}_2+
\sqrt{\frac{\tau(\E(0)+\beta)}{\lambda_1}}\right)^2,
\quad G_\tau:=\left(\norm{\nabla u_0}_2+
\sqrt{\tau(\E(0)+\beta)}\right)^2,
\end{split}
\end{equation}
and \[d_\tau:=\frac{p_--2}{2}L_0-2G_\tau.\]
Then
\begin{equation}\label{35}
\Tmax\le\tau+
\frac{4\left[\sqrt{d_\tau^2+(p_--2)^2\beta U_\tau}-d_\tau\right]}
{(p_--2)^2\beta}.
\end{equation}
\end{theorem}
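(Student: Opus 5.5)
The proof rests on the concavity method applied to the auxiliary functional
\[
M(t)=\norm{u(t)}_2^2+\int_0^t\norm{\nabla u(s)}_2^2\,ds+\beta\,(t+\sigma)^2 ,
\]
with a preliminary step that carries nonnegative shifted energy down to negative shifted energy.

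First we record the corrected source--potential inequality. For a.e. $x$ one minimizes $h(r)=|r|^{p(x)}\log|r|-p_-F(x,r)$ in $s=|r|$. Writing $q=p(x)$, $h=s^q\bigl[\tfrac{q-p_-}{q}\log s+\tfrac{p_-}{q^2}\bigr]$. The critical point satisfies $\log s=-\tfrac{1}{q-p_-}-\tfrac{p_-}{q(q-p_-)}$, and the minimum value is $-c_{p_-}(q)$. Integrating gives
\[
\int_\Omega|u|^{p(x)}\log|u|\,dx\ge p_-\int_\Omega F(x,u)\,dx-C_* .
\]

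Testing the equation with $u$, which is justified for energy solutions, gives
\[
\frac{d}{dt}(u,u_t)=\norm{u_t}_2^2-\norm{\nabla u}_2^2-(\nabla u_t,\nabla u)+\int_0^t g(t-s)(\nabla u(s),\nabla u(t))\,ds+\int_\Omega|u|^{p(x)}\log|u| .
\]
We insert the corrected inequality and replace $-\int F$ by $\E-\tfrac12\norm{u_t}^2-\dots$ through the definition of the energy. The memory cross term is handled by Young's inequality, $\int g(\nabla u(s),\nabla u)\ge(1-\eta)\int g\,\norm{\nabla u}^2-\tfrac1{4\eta}\,g\circ\nabla u$, with $\eta$ chosen so that the coefficient of $g\circ\nabla u$ is nonnegative when $p_-/2\ge\tfrac1{4\eta}$. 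This is where $\ell\ge(p_--1)^{-2}$ enters, and the choice $\eta=1/(2(p_--1))$-type makes the coefficient of $\norm{\nabla u}^2$ equal to $\tfrac{p_-}{2}\bigl[(p_--1)^2\ell-1\bigr]/(p_--1)\ge0$, up to the exact constant. The result is
\[
\frac{d}{dt}(u,u_t)\ge \tfrac{p_-+2}{2}\norm{u_t}^2-p_-\E(t)-(\nabla u_t,\nabla u)+c\,\norm{\nabla u}^2 .
\]

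\textbf{Case (i).} We have $M'=2(u,u_t)+\norm{\nabla u}^2+2\beta(t+\sigma)$. Because $\tfrac{d}{dt}\norm{\nabla u}^2=2(\nabla u_t,\nabla u)$, the damping term cancels, so
\[
M''\ge (p_-+2)\norm{u_t}^2+2p_-\bigl(\beta-\E\bigr)+2\beta \ge (p_-+2)\Bigl(\norm{u_t}^2+\int_0^t\norm{\nabla u_s}^2+\beta\Bigr),
\]
using $-\E(t)\ge\beta+\int_0^t\norm{\nabla u_s}^2$ from \eqref{dissipation}. This is the standard argument with $\norm{\nabla u}^2=\norm{\nabla u_0}^2+2\int_0^t(\nabla u,\nabla u_s)$. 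Cauchy--Schwarz then gives $MM''-\tfrac{p_-+2}{4}M'^2\ge0$, so $M^{-(p_--2)/4}$ is concave. Its tangent line at $0$ hits zero at $4M(0)/((p_--2)M'(0))$. Since $M(0)=\norm{u_0}^2+\beta\sigma^2$ and $M'(0)=2(u_0,u_1)+\norm{\nabla u_0}^2+2\beta\sigma$, the exact form of \eqref{30}, with $-2\norm{\nabla u_0}^2$ in the denominator, indicates that the authors instead use $\int_0^t\norm{\nabla u}^2$ shifted by $t\norm{\nabla u_0}^2$ or $-\norm{\nabla u_0}^2$. We would adjust $M$ to match. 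Optimizing the resulting quadratic-over-linear expression in $\sigma$ yields \eqref{31}.

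\textbf{Cases (ii) and (iii).} Following \cite[Lemma~2.7]{Liao}, we show $L(t):=(u,u_t)-\tfrac{p_-}{C}\E(t)$ satisfies $L'\ge CL$. We use the inequality above, the term $c\norm{\nabla u}^2$ to absorb $(\nabla u_t,\nabla u)$ via Young's inequality together with $-\E'=\norm{\nabla u_t}^2+\dots$, and Poincar\'e's inequality to dominate $(u,u_t)$. This is exactly where the two entries of $C$ in \eqref{29} arise. Hence $L(t)\ge L_0e^{Ct}\ge L_0(1+Ct)$. In particular $(u,u_t)\ge L$ grows.

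The main obstacle is the quantitative step. We must show that either $\Tmax\le\tau$, or $\E(t_0)\le-\beta$ at some $t_0\le\tau$. We argue by contradiction: suppose $\E>-\beta$ on $[0,\tau]$. Then \eqref{dissipation} gives $\int_0^t\norm{\nabla u_s}^2\le\E(0)+\beta$, so
\[
\norm{\nabla u(t)}\le\norm{\nabla u_0}+\sqrt{t(\E(0)+\beta)},
\]
and by \eqref{poincare} $\norm{u(t)}\le\norm{u_0}+\sqrt{t(\E(0)+\beta)/\lambda_1}$. These are $G_\tau$ and $U_\tau$. On the other hand, $\tfrac{d}{dt}\norm{u}^2=2(u,u_t)\ge2L_0(1+Ct)-\tfrac{2p_-}{C}(\E+\beta)$-type terms. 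Integrating bounds $\norm{u(\tau)}^2$ from below by a quadratic in $\tau$, while the upper bound via $\lambda_1$ grows only like $\tau$. The defining equation \eqref{33} is precisely where these two estimates collide. I expect matching the exact coefficients of \eqref{33} to require care.

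At $t_0$ we have $\E(t_0)\le-\beta$, $(u,u_t)(t_0)\ge L_0$ (times $e^{Ct_0}\ge1$), $\norm{u(t_0)}^2\le U_\tau$ and $\norm{\nabla u(t_0)}^2\le G_\tau$. We restart case (i) from $t_0$ with these data. Monotonicity of the bound \eqref{31} in its arguments yields the remaining time, which is the second term in \eqref{35}. The halving $\tfrac{p_--2}{2}L_0$ in $d_\tau$ suggests using $(u,u_t)\ge L_0/2$ after absorbing the energy term; this gives \eqref{35}.
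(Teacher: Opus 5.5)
Your plan has the same architecture as the paper's: the corrected inequality \eqref{37}, concavity for negative $\E$, exponential growth of $L$, a stopping argument built on \eqref{33}, and a second use of the negative-energy estimate. However, two load-bearing steps fail as written.

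\emph{The concavity functional.} For $M=\norm{u}_2^2+\int_0^t\norm{\nabla u(s)}_2^2\,ds+\beta(t+\sigma)^2$ one has $\tfrac12M'=(u,u_t)+\int_0^t(\nabla u,\nabla u_s)\,ds+\beta(t+\sigma)+\tfrac12\norm{\nabla u_0}_2^2$. The extra constant $\tfrac12\norm{\nabla u_0}_2^2$ breaks the Cauchy--Schwarz bound $\tfrac14(M')^2\le M\bigl[\norm{u_t}_2^2+\int_0^t\norm{\nabla u_s}_2^2\,ds+\beta\bigr]$. So $MM''\ge\frac{p_-+2}{4}(M')^2$ is not justified; it would even give a bound better than \eqref{30}, which is the mismatch you noticed. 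The needed adjustment is to add $(R-s)\norm{\nabla u(t_0)}_2^2$, which is nonnegative only on a bounded interval. The estimate therefore has to be proved by contradiction: if \eqref{43} fails, choose $R$ strictly between its right-hand side and $\Tmax-t_0$, and apply Lemma~\ref{concavity} on $[0,R]$. Now $M(0)=\norm{u(t_0)}_2^2+R\norm{\nabla u(t_0)}_2^2+\beta\sigma^2$ depends on $R$, while $M'(0)=2[(u(t_0),u_t(t_0))+\beta\sigma]$ does not. Hence $R>M(0)/(\theta M'(0))$ rearranges exactly to the choice of $R$, and this is where $-2\norm{\nabla u(t_0)}_2^2$ in the denominator comes from. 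Also, ``restarting case (i) at $t_0$'' is not a restart of the Cauchy problem, since the memory integral keeps its lower limit $0$. It works only because the argument uses \eqref{37} and \eqref{40} pointwise in time plus dissipation on $[t_0,t]$, which is what Lemma~\ref{negativelemma} records.

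\emph{The stopping time.} Since $(u,u_t)=L+\frac{p_-}{C}\E$, your claim $(u,u_t)\ge L$ holds only while $\E\ge0$. At a time with merely $\E(t_0)\le-\beta$ you have no lower bound on $(u(t_0),u_t(t_0))$. Likewise, $\int_0^{t_0}\norm{\nabla u_s}_2^2\,ds\le\E(0)-\E(t_0)$ no longer yields $U_\tau$ and $G_\tau$. You must take $t_0$ with $\E(t_0)=-\beta$ exactly, i.e.\ the first hitting time, using continuity and $\E(0)\ge0$. Then $\E\ge-\beta$ on $[0,t_0]$, and $(u(t_0),u_t(t_0))=L(t_0)-\frac{p_-\beta}{C}\ge L_0-\frac{L_0}{2}$; this is precisely why $\beta=CL_0/(2p_-)$.

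With that fix the rest of your outline works. Under the strict hypothesis $\E>-\beta$ on $[0,\tau]$, $e^{Ct}\ge1+Ct$ already gives $\norm{u(\tau)}_2^2>\norm{u_0}_2^2+L_0\tau+CL_0\tau^2$. By \eqref{33} this equals $2\norm{u_0}_2^2+2\tau(\E(0)+\beta)/\lambda_1$, which is the square of your bound $\norm{u_0}_2+\sqrt{\tau(\E(0)+\beta)/\lambda_1}$ after $(a+b)^2\le2a^2+2b^2$. Your monotonicity argument (the bound in \eqref{52} decreases in $d$ and increases in $U$) is a valid substitute for the paper's fixed-$\sigma$ estimate \eqref{aa1}.

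Two minor slips remain. The critical point in the correction lemma is $\log s=-1/(q-p_-)$. The Young parameter must be the smallest admissible one, $\eta=1/(2p_-)$, to obtain exactly $(p_--1)^2\ell-1$. Your $\eta=1/(2(p_--1))$ would require $\ell\ge1/(p_-^2-3p_-+3)$, which is stronger than \eqref{kw}.
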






\begin{theorem}[Blow-up data at every prescribed energy level]
\label{prescribed}
Assume \textup{(H1)--(H2)} and \eqref{ks}.
For every $R\in\mathbb R$, there exist initial data
$(u_0,u_1)\in C_c^\infty(\Omega)\times C_c^\infty(\Omega)$
such that $E(0)=R$ and the corresponding solution
blows up in finite time.
Moreover, these data satisfy the criterion in
Theorem~\ref{blowup} corresponding to the sign of
$\E(0)=R+C_*/p_-$, and the associated upper lifespan
bound applies.
\end{theorem}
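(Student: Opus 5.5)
We will construct the data by a scaling argument and then invoke Theorem~\ref{blowup}; the criteria there depend only on $E(0)$, $(u_0,u_1)$ and norms of $u_0$.

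\textbf{Choice of displacement.} Fix a nonzero $\phi\in C_c^\infty(\Omega)$ and put $u_0=\lambda\phi$ with $\lambda>0$ large. Since $t=0$, the memory terms in \eqref{energy} vanish, so
\[
E(0)=\tfrac12\norm{u_1}_2^2+\tfrac12\norm{\nabla u_0}_2^2-\int_\Omega F(x,\lambda\phi)\,dx .
\]
For $\lambda\to\infty$ the integral $\int_\Omega F(x,\lambda\phi)\,dx$ grows at least like $c\lambda^{p_-}\log\lambda$ on the set where $|\phi|\ge1$ (after normalizing $\phi$), while the gradient term grows like $\lambda^2$. Since $p_->2$, we can choose $\lambda$ so large that
\[
A:=\tfrac12\norm{\nabla u_0}_2^2-\int_\Omega F(x,u_0)\,dx
\]
is below any prescribed level, in particular $A<R$. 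Here the negative part of $F$ is bounded pointwise, which gives the error control.

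\textbf{Choice of velocity.} Take $u_1=\mu\phi$ with $\mu>0$ determined by $\tfrac12\mu^2\norm{\phi}_2^2=R-A$. Then $E(0)=R$ exactly and $(u_0,u_1)=\lambda\mu\norm{\phi}_2^2>0$. Now split according to the sign of $\E(0)=R+C_*/p_-$.
\begin{enumerate}
\item If $\E(0)<0$, case (i) of Theorem~\ref{blowup} applies directly, since \eqref{ks} implies \eqref{kw}.
\item If $\E(0)=0$, case (ii) applies because $(u_0,u_1)>0$.
\item If $\E(0)>0$, we must verify \eqref{he}, namely $R+C_*/p_-<\frac{C}{p_-}\lambda\mu\norm{\phi}_2^2$. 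This is the main point.
\end{enumerate}

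\textbf{Verifying \eqref{he}.} We have $\mu=\sqrt{2(R-A)}/\norm{\phi}_2$, where $R-A\ge -A\to\infty$ as $\lambda\to\infty$. Hence $\lambda\mu\to\infty$, while the left-hand side $R+C_*/p_-$ is fixed. Therefore, increasing $\lambda$ further (which only decreases $A$ and keeps $A<R$), we obtain \eqref{he}. The constant $C$ in \eqref{29} is independent of the data, so the choice of $\lambda$ is uniform in this sense. The associated lifespan bound \eqref{35}, or \eqref{30}--\eqref{31} in the negative case, then applies verbatim.

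\textbf{Main obstacle.} The only delicate point is the precise lower growth of $\int_\Omega F(x,\lambda\phi)\,dx$ when $p(x)$ is merely measurable. We handle it by splitting $\Omega$ into $\{|\lambda\phi|\ge e\}$ and its complement:
\begin{itemize}
\item On $\{|\lambda\phi|\ge e\}$ we have $F\ge |r|^{p(x)}\bigl(\log|r|-1/2\bigr)/p(x)\ge |r|^{p_-}/(2p_+)$.
\item On the complement, $|F|$ is bounded by a constant depending on $p_\pm$.
\end{itemize}
Choosing $\phi\equiv 2$ on a ball $B\subset\Omega$ then gives $\int_\Omega F\ge c\lambda^{p_-}|B|-C'$. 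This dominates $\lambda^2\norm{\nabla\phi}_2^2$, which completes the argument.
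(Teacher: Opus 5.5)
Your proposal is correct and follows essentially the same route as the paper. The paper also takes $u_0=s\varphi$, $u_1=b_s\varphi$ with $b_s$ tuned so that $E(0)=R$, and uses superquadratic growth of $\int_\Omega F(x,s\varphi)\,dx$ along the ray, obtained from the same two bounds you use ($F\ge -1/p_-^2$ everywhere and a power lower bound on a ball where $\varphi$ is constant), to force $(u_0,u_1)\to\infty$ at fixed $\E(0)$ before splitting by the sign of $\E(0)$. Two harmless slips: $R-A\ge -A$ only holds for $R\ge 0$, though $R-A\to\infty$ anyway since $A\to-\infty$; and monotonicity of $A$ in $\lambda$ is neither justified nor needed, since every required condition holds for all sufficiently large $\lambda$.
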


This theorem asserts the existence of suitable blow-up data at each energy level, not blow-up for every datum at that level. Proposition~\ref{comparison} below gives a different construction that demonstrates the removal of \eqref{3} in the constant-exponent case.

\section{The corrected source-potential inequality and lifespan estimates}\label{estimates}
In this section, we develop the estimates responsible for blow-up and the explicit lifespan bounds. The main steps are the full-history estimate in Lemma~\ref{negativelemma} and the passage to a negative shifted energy level in Lemma~\ref{stoppinglemma}.

\begin{lemma}[Optimal pointwise correction]\label{correctionlemma}
For every $q\ge p_->2$ and $r\in\mathbb R$,
\begin{equation}\label{36}
|r|^q\left(
\frac{q-p_-}{q}\log|r|+\frac{p_-}{q^2}
\right)\ge -c_{p_-}(q),
\end{equation}
where the expression at $r=0$ is defined by continuity.
For each fixed $q$, $c_{p_-}(q)$ is the smallest
nonnegative constant for which
\eqref{36} holds for all $r\in\mathbb R$.
Consequently, every energy solution satisfies
\begin{equation}\label{37}
\begin{aligned}
\int_\Omega |u|^{p(x)}\log|u|\,dx
\ge{}&
\frac{p_-}{2}\norm{u_t}_2^2
+\frac{p_-}{2}
\left(1-\int_0^t g(s)\,ds\right)
\norm{\nabla u}_2^2\\
&+\frac{p_-}{2}(g\circ\nabla u)(t)
-p_-\E(t).
\end{aligned}
\end{equation}
\end{lemma}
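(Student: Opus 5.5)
The argument splits into an elementary one-variable minimization, which gives \eqref{36} together with the optimality of $c_{p_-}$, and an integration step that converts the pointwise bound into \eqref{37} via the definition of $\E$.

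\textbf{Pointwise bound \eqref{36}.} Fix $q\ge p_-$ and set $h(r)=r^q\bigl(a\log r+b\bigr)$ for $r>0$, with $a=(q-p_-)/q\ge0$ and $b=p_-/q^2>0$. The expression is even in $r$ and tends to $0$ as $r\to0$, so it suffices to minimize $h$ over $r>0$.

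If $q=p_-$, then $a=0$ and $h=br^q\ge0$. Since $h(r)\to0$ as $r\to0$, the smallest admissible nonnegative constant is $0=c_{p_-}(p_-)$.

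If $q>p_-$, then $a>0$ and
\[
h'(r)=r^{q-1}\bigl(qa\log r+qb+a\bigr),
\]
which has a unique zero at $\log r_0=-(qb+a)/(qa)=-b/a-1/q$. Hence $h$ decreases on $(0,r_0)$ and increases afterwards, and $r_0$ is the global minimizer on $(0,\infty)$. At this point $a\log r_0+b=-a/q$, so
\[
h(r_0)=-\frac aq\,r_0^q=-\frac{q-p_-}{q^2}\exp\Bigl(-\frac{qb}{a}-1\Bigr).
\]
Now compute the exponent: $qb/a=\dfrac{p_-/q}{(q-p_-)/q}=\dfrac{p_-}{q-p_-}$, so
\[
\frac{qb}{a}+1=\frac{p_-}{q-p_-}+1=\frac{q}{q-p_-}.
\]
Therefore $\min h=-c_{p_-}(q)$. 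Because the minimum is attained, no smaller constant works, which proves optimality.

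\textbf{Integrated inequality \eqref{37}.} The identity in the introduction gives, for almost every $x$,
\[
|u|^{p(x)}\log|u|-p_-F(x,u)=|u|^{p(x)}\Bigl(\frac{p(x)-p_-}{p(x)}\log|u|+\frac{p_-}{p(x)^2}\Bigr)\ge -c_{p_-}(p(x)),
\]
using \eqref{36} with $q=p(x)$. Integrate over $\Omega$; both $|u|^{p(x)}\log|u|$ and $F(x,u)$ are integrable, as justified below. This yields
\[
\int_\Omega|u|^{p(x)}\log|u|\,dx\ge p_-\int_\Omega F(x,u)\,dx-C_*.
\]
By \eqref{energy}, $\int_\Omega F\,dx$ equals the quadratic terms $\frac12\norm{u_t}_2^2+\frac12(1-\int_0^tg)\norm{\nabla u}_2^2+\frac12(g\circ\nabla u)$ minus $E(t)$. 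Substituting $E=\E-C_*/p_-$ makes the $C_*$ terms cancel, and \eqref{37} follows.

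\textbf{Integrability (the only delicate point).} For $|u|\le1$, the function $|u|^{p(x)}|\log|u||$ is bounded by a constant depending only on $p_-$. For $|u|>1$ and a fixed $\epsilon>0$ with $p_++\epsilon\le2^*$ (or any $\epsilon>0$ if $n\le2$), we have
\[
|u|^{p(x)}\log|u|\le C_\epsilon|u|^{p_++\epsilon}.
\]
This is integrable because $u\in V\hookrightarrow L^{p_++\epsilon}$, which is available since $p_+<2^*$. The same bounds control $F$. Both functions are measurable in $x$ since $p$ is measurable, so no continuity of $p$ is needed.
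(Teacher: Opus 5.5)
Your proof is correct and follows the paper's argument. You minimize in $r$ rather than in $y=\log|r|$, but the critical point $\log r_0=-1/(q-p_-)$ and the minimum value $-c_{p_-}(q)$ are the same; you then integrate with $q=p(x)$ and use $p_-\E=p_-E+C_*$, and your integrability check via $H_0^1(\Omega)\hookrightarrow L^{p_++\epsilon}(\Omega)$ is a sound detail that the paper leaves implicit.
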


\begin{proof}
For $q=p_-$, the left-hand side of
\eqref{36} equals
$|r|^{p_-}/p_-\ge0$.
Thus the optimal nonnegative correction is
$c_{p_-}(p_-)=0$.
For  $q>p_-$ and $r\ne0$.
Setting $y=\log|r|$, we obtain
\[
\frac{d}{dy}
\left[
e^{qy}\left(
\frac{q-p_-}{q}y+\frac{p_-}{q^2}
\right)
\right]
=e^{qy}\bigl[(q-p_-)y+1\bigr].
\]
The derivative is negative for $y<-1/(q-p_-)$
and positive for $y>-1/(q-p_-)$.
Hence the global minimum is attained at
$y=-1/(q-p_-)$ and equals
\[
-\frac{q-p_-}{q^2}
e^{-\frac{q}{q-p_-}}
=-c_{p_-}(q).
\]
The expression in \eqref{36} tends
to zero as $r\to0$, so the inequality also holds
at $r=0$.
Moreover, equality holds when
$|r|=e^{-1/(q-p_-)}$, which proves optimality.

By the definition of $F$, taking $q=p(x)$
in \eqref{36} and integrating over
$\Omega$ gives
\[
\int_\Omega |u|^{p(x)}\log|u|\,dx
\ge p_-\int_\Omega F(x,u)\,dx-C_*.
\]
Using \eqref{energy} and
$p_-\E(t)=p_-E(t)+C_*$ yields
\eqref{37}.
\end{proof}

\begin{corollary}\label{correctionsize}
Let $\Delta=p_+-p_-$. If $\Delta>0$, then
\begin{equation}\label{38}
0\le C_*\le\frac{|\Omega|\Delta}{p_-^2}
e^{-1-\frac{p_-}{\Delta}}.
\end{equation}
If $\Delta=0$, then $C_*=0$.
\end{corollary}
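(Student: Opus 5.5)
The plan is to bound $c_{p_-}(q)$ uniformly for $q\in[p_-,p_+]$ and then integrate over $\Omega$. If $\Delta=0$, then $p(x)=p_-$ almost everywhere, so $c_{p_-}(p(x))=0$ a.e. by \eqref{correction}, and hence $C_*=0$. The lower bound $C_*\ge0$ is immediate because $c_{p_-}\ge0$. For $\Delta>0$, it suffices to show that
\[
c_{p_-}(q)\le \frac{\Delta}{p_-^2}e^{-1-\frac{p_-}{\Delta}}\qquad\text{for all } q\in[p_-,p_+].
\]
Integrating this over $\Omega$ gives \eqref{38}.

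To prove the pointwise bound, write $q=p_-+s$ with $s\in(0,\Delta]$; the case $s=0$ is trivial. Then
\[
\frac{q}{q-p_-}=1+\frac{p_-}{s},\qquad\text{so}\qquad
c_{p_-}(q)=\frac{s}{(p_-+s)^2}\,e^{-1-\frac{p_-}{s}}.
\]
Now use $(p_-+s)^2\ge p_-^2$ to get
\[
c_{p_-}(q)\le \frac{1}{p_-^2}\, s\,e^{-1-p_-/s}.
\]
The function $h(s)=s\,e^{-p_-/s}$ is increasing on $(0,\infty)$, since $h'(s)=e^{-p_-/s}(1+p_-/s)>0$. Therefore $h(s)\le h(\Delta)$ for $s\le\Delta$, which gives the claimed bound with the value at $s=\Delta$.

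There is no real obstacle. The only point to watch is that the crude estimate $(p_-+s)^2\ge p_-^2$ is what allows monotonicity in $s$. Keeping the exact denominator would make the function non-monotone in general, so this simplification should be made before maximizing.
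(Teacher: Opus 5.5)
Your proof is correct and matches the paper's argument: you use the same substitution $q=p_-+d$ with $0<d\le\Delta$, the same bound $(p_-+d)^2\ge p_-^2$, and integrate over $\Omega$, reading off the case $\Delta=0$ directly from the definition of $c_{p_-}$. The only difference is that you spell out why $d\mapsto d\,e^{-p_-/d}$ is increasing, which the paper uses without comment when it replaces $d$ by $\Delta$.
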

\begin{proof}
Set $q=p_-+d$, with $0<d\le\Delta$. Then
\[
c_{p_-}(q)=\frac{d}{(p_-+d)^2}e^{-1-p_-/d}
\le\frac{\Delta}{p_-^2}e^{-1-p_-/\Delta}.
\]
The same upper bound holds at $q=p_-$. Integration gives \eqref{38}. The case $\Delta=0$ follows directly from \eqref{correction}.
\end{proof}

\begin{remark}[Constant exponents]\label{constantcase}
If $p(x)\equiv p$, then $C_*=0$ and $\E=E$. Condition~\eqref{he} becomes \eqref{2}, with the same constant $C$ as in~\cite{Liao}. The present lifespan estimate uses a different argument and does not require \eqref{3}. More generally, \eqref{38} shows that $C_*\to0$ as $p_+-p_-\to0$ when $p_-$ stays bounded below by a fixed number greater than $2$.
\end{remark}

The following growth estimate is an adaptation of
\cite[Lemma~2.7]{Liao} to the shifted energy $\E$.
The key ingredient is the corrected source--potential
inequality \eqref{37}.
\begin{lemma}\label{growthlemma}
Assume \textup{(H1)--(H2)} and \eqref{ks}.
Define
\begin{equation*}
L(t)=(u(t),u_t(t))-\frac{p_-}{C}\E(t).
\end{equation*}
Then $L$ is locally absolutely continuous on
$[0,T_{\max})$ and satisfies
\begin{equation*}
L(t)\ge L(0)e^{Ct}
\quad\text{for }0\le t<T_{\max}.
\end{equation*}
\end{lemma}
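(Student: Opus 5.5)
We will show that $L'(t)\ge C\,L(t)$ almost everywhere on $[0,\Tmax)$ and then apply Gronwall's inequality.

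\textbf{Step 1: absolute continuity and the derivative of $(u,u_t)$.} Put $M(t)=(u(t),u_t(t))$. The regularity \eqref{regularity} gives $u\in L^2(0,T;V)$ with $u_t\in L^2(0,T;V)$, and $u_t\in L^2(0,T;V)$ with $u_{tt}\in L^2(0,T;V^*)$. By the Lions--Magenes lemma, $M$ is absolutely continuous and
\[
M'(t)=\norm{u_t}_2^2+\pair{u_{tt}}{u}.
\]
Taking $z=u(t)$ in Definition~\ref{solution} gives
\[
M'=\norm{u_t}_2^2-\norm{\nabla u}_2^2-(\nabla u_t,\nabla u)+\int_0^t g(t-s)(\nabla u(s),\nabla u(t))\,ds+\int_\Omega|u|^{p(x)}\log|u|\,dx .
\]
By Theorem~\ref{local}, $\E$ is also locally absolutely continuous, hence so is $L$.

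\textbf{Step 2: lower bound for $M'$.} Use \eqref{37} to replace the source integral by $\frac{p_-}{2}\norm{u_t}_2^2+\frac{p_-}{2}(1-\int_0^tg)\norm{\nabla u}_2^2+\frac{p_-}{2}(g\circ\nabla u)-p_-\E$. The memory term is handled by the standard splitting
\[
\int_0^t g(t-s)(\nabla u(s),\nabla u)\,ds=\Bigl(\int_0^tg\Bigr)\norm{\nabla u}_2^2+\int_0^tg(t-s)(\nabla u(s)-\nabla u(t),\nabla u(t))\,ds .
\]
Apply Young's inequality with parameter $\eta$ to the last integral, which gives a lower bound $-\frac{1}{4\eta}(g\circ\nabla u)-\eta(\int_0^tg)\norm{\nabla u}_2^2$. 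Choose $\eta=1/(2p_-)$ so that it is absorbed by $\frac{p_-}{2}(g\circ\nabla u)$.

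The damping cross term satisfies $-(\nabla u_t,\nabla u)\ge-\frac{\varepsilon}{2}\norm{\nabla u}_2^2-\frac{1}{2\varepsilon}\norm{\nabla u_t}_2^2$. The piece $\frac{1}{2\varepsilon}\norm{\nabla u_t}_2^2$ is then compensated in $L'$ by $-\frac{p_-}{C}\E'\ge\frac{p_-}{C}\norm{\nabla u_t}_2^2$, which follows from \eqref{ei} and $g\ge0$, $g'\le0$. This requires $\frac{1}{2\varepsilon}\le\frac{p_-}{C}$.

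Collecting the $\norm{\nabla u}_2^2$ terms with $\int_0^tg\le1-\ell$, the coefficient is bounded below by a multiple of $(p_--1)^2\ell-1$, possibly after a further optimization of the Young parameters. This positive coefficient explains condition \eqref{ks}.

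\textbf{Step 3: reaching $L'\ge CL$.} We need
\[
M'-\tfrac{p_-}{C}\E'\ge C\Bigl(M-\tfrac{p_-}{C}\E\Bigr)=CM-p_-\E .
\]
The term $-p_-\E$ produced by \eqref{37} matches the right-hand side exactly. What remains is to show
\[
\Bigl(1+\tfrac{p_-}{2}\Bigr)\norm{u_t}_2^2+\kappa\norm{\nabla u}_2^2\ge C(u,u_t),
\]
where $\kappa>0$ is the leftover gradient coefficient. Using $C(u,u_t)\le\frac{C}{2}\bigl(\norm{u_t}_2^2+\norm{u}_2^2\bigr)$ and $\norm{u}_2^2\le\lambda_1^{-1}\norm{\nabla u}_2^2$ from \eqref{poincare}, it suffices that $C\le p_-+2$ and $C\le 2\lambda_1\kappa$.

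\textbf{Main obstacle.} The hardest step is the bookkeeping that makes this admissible with exactly the constant \eqref{29}. The parameter $\varepsilon$ is tied to $C$ through $\frac{1}{2\varepsilon}\le\frac{p_-}{C}$, so $\kappa$ itself depends on $C$. Taking $\varepsilon=C/(2p_-)$ yields an inequality of the form $C\le 2\lambda_1[\kappa_0-C/(4p_-)]$, where $\kappa_0$ is proportional to $(p_--1)^2\ell-1$. Solving for $C$ should produce the second entry $\frac{2\lambda_1[(p_--1)^2\ell-1]}{2p_-+\lambda_1}$ of \eqref{29}, and I would tune the Young constants until this matches.

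Once $L'\ge CL$ holds almost everywhere, $(e^{-Ct}L)'\ge0$, which integrates to $L(t)\ge L(0)e^{Ct}$.
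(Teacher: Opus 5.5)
Your proposal is correct and follows the paper's proof essentially line for line. You use the same splitting of the memory term with Young parameter $1/(2p_-)$, the same choice $\varepsilon=C/(2p_-)$ so that $\frac{p_-}{C}\norm{\nabla u_t}_2^2$ is absorbed by $-\frac{p_-}{C}\E'$, and the same closing step via $2(u,u_t)\le\norm{u}_2^2+\norm{u_t}_2^2$ and Poincar\'e. The bookkeeping you left hedged closes without further tuning: using $\int_0^tg\le1-\ell$, the gradient coefficient is $\frac{p_--2}{2}-\frac{(p_--1)^2}{2p_-}\int_0^tg\ge\frac{(p_--1)^2\ell-1}{2p_-}=:\kappa_0$, and the requirement $C\le2\lambda_1(\kappa_0-C/(4p_-))$ is exactly the second entry of \eqref{29}.
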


\begin{proof}
Fix $T<T_{\max}$.
The regularity \eqref{regularity} and the energy
identity imply that $L$ is absolutely continuous
on $[0,T]$. Using $\eqref{problem}_1$ gives
\begin{equation}\label{39}
\begin{aligned}
\frac{d}{dt}(u,u_t)
={}&\norm{u_t}_2^2
-\left(1-\int_0^t g(s)\,ds\right)
 \norm{\nabla u}_2^2
-(\nabla u,\nabla u_t)\\
&+\int_0^t g(t-s)
  (\nabla u(t),\nabla u(s)-\nabla u(t))\,ds
+\int_\Omega |u|^{p(x)}\log|u|\,dx
\end{aligned}
\end{equation}
for almost every $t\in(0,T)$.
By Cauchy--Schwarz and Young's inequalities, one has
\begin{equation}\label{40}
\begin{aligned}
\int_0^t g(t-s)
(\nabla u(t),\nabla u(s)-\nabla u(t))\,ds
\ge
-\frac{1}{2p_-}
 \left(\int_0^t g(s)\,ds\right)
 \norm{\nabla u(t)}_2^2
-\frac{p_-}{2}(g\circ\nabla u)(t),
\end{aligned}
\end{equation}
\[
-(\nabla u,\nabla u_t)
\ge
-\frac{C}{4p_-}\norm{\nabla u}_2^2
-\frac{p_-}{C}\norm{\nabla u_t}_2^2.
\]
Substituting these estimates and
\eqref{37} into
\eqref{39}, we get

\begin{equation*}
\begin{aligned}
\frac{d}{dt}(u,u_t)
={}&\frac{p_-+2}{2}\norm{u_t}_2^2
+\frac{2[(p_--1)^2\ell-1]-C}{4p_-}
 \norm{\nabla u}_2^2\\
&-\frac{p_-}{C}\norm{\nabla u_t}_2^2+\frac{p_-}{2}\norm{u_t}_2^2
-p_-\E(t).
\end{aligned}
\end{equation*}
Here we used
\[
\begin{aligned}
\frac{p_--2}{2}
 \left(1-\int_0^t g(s)\,ds\right)
-\frac{1}{2p_-}\int_0^t g(s)\,ds
-\frac{C}{4p_-}\ge
\frac{2[(p_--1)^2\ell-1]-C}{4p_-}.
\end{aligned}
\]
Note that
$\E'(t)\le-\norm{\nabla u_t(t)}_2^2$, one gets
\begin{equation}\label{41}
L'(t)\ge
\frac{p_-+2}{2}\norm{u_t}_2^2
+\frac{2[(p_--1)^2\ell-1]-C}{4p_-}
 \norm{\nabla u}_2^2
-p_-\E(t).
\end{equation}
By \eqref{29},
\[
C\le p_-+2,
\quad
\lambda_1
\frac{2[(p_--1)^2\ell-1]-C}{4p_-}
\ge\frac{C}{2}.
\]
Consequently, Poincar\'e's inequality \eqref{poincare} and
$2(u,u_t)\le\norm{u}_2^2+\norm{u_t}_2^2$ yield
\[
\begin{aligned}
L'(t)
\ge \frac{C}{2}
\left(\norm{u_t}_2^2+\norm{u}_2^2\right)
-p_-\E(t)\ge C(u,u_t)-p_-\E(t)
=CL(t).
\end{aligned}
\]
Hence $(e^{-Ct}L(t))'\ge0$ almost everywhere.
Integration gives $L(t)\ge e^{Ct}L(0)$ on $[0,T]$.
Since $T<T_{\max}$ was arbitrary, the proof
is complete.
\end{proof}

We use the classical concavity criterion associated with Levine's method~\cite{Levine}, see also \cite[Lemma~2.6]{Liao} and~\cite{LiaoGao}. The brief proof records the time regularity needed here.
\begin{lemma}\label{concavity}
Let $M\in W^{2,1}(0,R)\cap C^1([0,R])$ be positive and satisfy
\[
MM''-(1+\theta)(M')^2\ge0\quad\text{a.e. on }(0,R)
\]
for some $\theta>0$. If $M'(0)>0$, such a function cannot exist on $[0,R]$ when
\begin{equation*}
R>\frac{M(0)}{\theta M'(0)}.
\end{equation*}
\end{lemma}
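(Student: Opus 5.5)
The plan is to argue by contradiction using the classical substitution $N = M^{-\theta}$, which converts the differential inequality into concavity of $N$. Suppose $M$ exists on $[0,R]$ with $R>M(0)/(\theta M'(0))$. Because $M$ is positive and continuous on the compact interval $[0,R]$, $M$ is bounded below by a positive constant. Since $M\in W^{2,1}(0,R)\cap C^1([0,R])$, the function $N(t):=M(t)^{-\theta}$ then lies in $C^1([0,R])$, and
\[
N'=-\theta M^{-\theta-1}M'
\]
is absolutely continuous, being a product of an absolutely continuous function ($M'$) with the $C^1$ function $M^{-\theta-1}$. Differentiating once more, almost everywhere we get
\[
N''=-\theta M^{-\theta-2}\bigl[MM''-(1+\theta)(M')^2\bigr]\le 0.
\]
Hence $N'$ is absolutely continuous with nonpositive derivative a.e., so $N'$ is nonincreasing. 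This is the one point where the regularity matters: we need the fundamental theorem of calculus for $N'$, which is exactly why $M'$ is required to be absolutely continuous, i.e.\ why we assume $M\in W^{2,1}$.

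From the monotonicity of $N'$ we get $N(t)\le N(0)+N'(0)t$ for all $t\in[0,R]$, with
\[
N(0)=M(0)^{-\theta}>0,\qquad N'(0)=-\theta M(0)^{-\theta-1}M'(0)<0,
\]
the sign using $M'(0)>0$. The right-hand side $N(0)+N'(0)t$ vanishes at
\[
t_*=\frac{N(0)}{-N'(0)}=\frac{M(0)}{\theta M'(0)}<R .
\]
Therefore $N(t_*)\le 0$. This contradicts $N=M^{-\theta}>0$ on $[0,R]$, so no such $M$ can exist on $[0,R]$.

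There is no substantial obstacle beyond checking the chain rule in the Sobolev/absolutely continuous setting. This can also be justified directly: $M^{-\theta-1}$ is Lipschitz in $t$, since $M\ge c>0$ and $M\in C^1$, and products of absolutely continuous functions on a compact interval are absolutely continuous, with the product rule holding a.e.
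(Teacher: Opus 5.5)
Your proposal is correct and follows essentially the same route as the paper: the substitution $M^{-\theta}$, concavity from $(M^{-\theta})''\le 0$, the tangent-line bound at $0$, and a contradiction with positivity. The only differences are cosmetic. You justify the chain rule explicitly via products of absolutely continuous functions, and you evaluate at the zero $t_*=M(0)/(\theta M'(0))$ of the tangent line rather than at a point beyond it.
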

\begin{proof}
Since $M\in C^1([0,R])$ and $M>0$ on the compact interval $[0,R]$, there exists
$m_0>0$ such that
\[
M(s)\ge m_0\quad\text{for all }s\in[0,R].
\]
 The Sobolev chain rule gives
\[
(M^{-\theta})''=-\theta M^{-\theta-2}
\bigl[MM''-(1+\theta)(M')^2\bigr]\le0,
\]
which reveals that $M^{-\theta}$ is concave on $[0,R]$.
Hence $M(s)^{-\theta}\le M(0)^{-\theta}-\theta M(0)^{-\theta-1}M'(0)s$. Equivalently,
\[
M(s)^{-\theta}
\le
M(0)^{-\theta}
\left(
1-\frac{\theta M'(0)}{M(0)}s
\right).
\]
If
$
s>\frac{M(0)}{\theta M'(0)},
$
then the right-hand side is strictly negative. This contradicts the positivity of the left-hand side, since $M(s)>0$ implies
$
M(s)^{-\theta}>0.
$
Therefore no point $s$ in the interval $[0,R]$ can exceed
$M(0)/(\theta M'(0))$. This completes the proof.
\end{proof}
The following argument starts at an arbitrary time $t_0$ with negative shifted energy. The dissipation inequality is applied on $[t_0,t]$, while the memory integral retains its lower limit zero. 
\begin{lemma}[Negative shifted energy with the full history]\label{negativelemma}
Assume \textup{(H1)--(H2)} and \eqref{kw}. If $\E(t_0)<0$ for some $t_0<\Tmax$, put $\beta=-\E(t_0)>0$. For every $\sigma>0$ satisfying
\begin{equation}\label{42}
(p_--2)\bigl[(u(t_0),u_t(t_0))+\beta\sigma\bigr]>2\norm{\nabla u(t_0)}_2^2,
\end{equation}
one has
\begin{equation}\label{43}
\Tmax\le t_0+
\frac{2(\norm{u(t_0)}_2^2+\beta\sigma^2)}
{(p_--2)[(u(t_0),u_t(t_0))+\beta\sigma]-2\norm{\nabla u(t_0)}_2^2}.
\end{equation}
\end{lemma}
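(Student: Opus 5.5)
The plan is Levine's concavity method (Lemma~\ref{concavity}), started at time $t_0$. The auxiliary functional integrates the strong-damping term only over $[t_0,t]$, while the memory term keeps its full history from $0$.

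\textbf{Step 1 (the functional).} Suppose, for contradiction, that $\Tmax$ exceeds the right-hand side of \eqref{43}. Fix $R$ with $t_0+R<\Tmax$ and $R$ larger than the fraction in \eqref{43}. For $s\in[0,R]$ set $t=t_0+s$ and
\[
M(s)=\norm{u(t)}_2^2+\int_{t_0}^{t}\norm{\nabla u(r)}_2^2\,dr+\beta(s+\sigma)^2 .
\]
The regularity \eqref{regularity} gives $M\in C^1([0,R])\cap W^{2,1}(0,R)$, and $M>0$ because $\beta\sigma^2>0$. Differentiating, and writing $\norm{\nabla u(t)}_2^2=\norm{\nabla u(t_0)}_2^2+2\int_{t_0}^t(\nabla u,\nabla u_r)\,dr$, gives
\[
M'(s)=2(u,u_t)+2\int_{t_0}^t(\nabla u,\nabla u_r)\,dr+\norm{\nabla u(t_0)}_2^2+2\beta(s+\sigma).
\]
For $M''$ I would use \eqref{39}. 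The $-(\nabla u,\nabla u_t)$ term there cancels against the derivative of $\int_{t_0}^t\norm{\nabla u}_2^2$. This gives
\[
M''=2\norm{u_t}_2^2-2\Bigl(1-\int_0^tg\Bigr)\norm{\nabla u}_2^2+2\int_0^tg(t-s)(\nabla u(t),\nabla u(s)-\nabla u(t))\,ds+2\int_\Omega|u|^{p(x)}\log|u|\,dx+2\beta .
\]

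\textbf{Step 2 (lower bound for $M''$).} Next I would insert \eqref{37} and then the dissipation inequality \eqref{dissipation} on $[t_0,t]$, in the form
\[
-p_-\E(t)\ge p_-\beta+p_-\int_{t_0}^t\norm{\nabla u_r}_2^2\,dr .
\]
The memory cross term is handled as in \eqref{40}, but with Young's weight chosen optimally against the $\frac{p_-}{2}(g\circ\nabla u)$ coming from \eqref{37}. The resulting coefficient of $\norm{\nabla u}_2^2$ is
\[
(p_--2)\Bigl(1-\int_0^tg\Bigr)-\frac{1}{p_-}\int_0^tg .
\]
I expect this to be nonnegative exactly under \eqref{kw}, using $1-\int_0^tg\ge\ell$. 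The outcome should be
\[
M''\ge(p_-+2)\Bigl[\norm{u_t}_2^2+\int_{t_0}^t\norm{\nabla u_r}_2^2\,dr+\beta\Bigr],
\]
possibly with a harmless extra $\beta$.

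\textbf{Step 3 (concavity inequality).} Cauchy--Schwarz applied termwise to $(u,u_t)$, to $\int_{t_0}^t(\nabla u,\nabla u_r)\,dr$ and to $\beta(s+\sigma)$ gives
\[
\Bigl[\tfrac12 M'-\tfrac12\norm{\nabla u(t_0)}_2^2\Bigr]^2\le M\Bigl[\norm{u_t}_2^2+\int_{t_0}^t\norm{\nabla u_r}_2^2\,dr+\beta\Bigr].
\]
This is the main obstacle, because of the constant $\norm{\nabla u(t_0)}_2^2$. I would first try the standard trick of working with $N(s)=M(s)+(R-s)\norm{\nabla u(t_0)}_2^2$ in place of $M$. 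Then $N'=M'-\norm{\nabla u(t_0)}_2^2$ is exactly the Cauchy--Schwarz expression, $N''=M''$, and $N\ge M$. With $\theta=(p_--2)/4$ this gives $NN''-(1+\theta)(N')^2\ge0$.

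\textbf{Step 4 (conclusion).} Apply Lemma~\ref{concavity} to $N$. Its initial slope is
\[
N'(0)=2\bigl[(u(t_0),u_t(t_0))+\beta\sigma\bigr].
\]
Its initial value $N(0)$ contains the extra term $R\norm{\nabla u(t_0)}_2^2$. The condition $R>N(0)/(\theta N'(0))$ is then linear in $R$. Solving it produces exactly the denominator $(p_--2)[(u(t_0),u_t(t_0))+\beta\sigma]-2\norm{\nabla u(t_0)}_2^2$, which must be positive; that is hypothesis \eqref{42}. It also produces the numerator $2(\norm{u(t_0)}_2^2+\beta\sigma^2)$ of \eqref{43}. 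The conclusion of Lemma~\ref{concavity} then contradicts $t_0+R<\Tmax$, which proves \eqref{43}.
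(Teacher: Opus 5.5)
Your proposal is correct and follows the paper's proof. Your modified functional $N(s)=M(s)+(R-s)\norm{\nabla u(t_0)}_2^2$ is exactly the functional the paper defines from the outset. The remaining steps coincide with the paper's Steps 1--3:
\begin{itemize}
\item the lower bound on $M''$, which yields the coefficient $\frac{(p_--1)^2\ell-1}{p_-}$ on $\norm{\nabla u}_2^2$ and the surplus $p_-\beta$;
\item the three-component Cauchy--Schwarz estimate together with $N\ge M$;
\item solving the condition $R>N(0)/(\theta N'(0))$, which is linear in $R$, with $\theta=(p_--2)/4$.
\end{itemize}
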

\begin{proof}
Fix an admissible $\sigma$. If \eqref{43} fails, let us choose $R$ satisfying
\[\frac{2\bigl(\|u(t_0)\|_2^2+\beta\sigma^2\bigr)}
{(p_--2)\bigl[(u(t_0),u_t(t_0))+\beta\sigma\bigr]
-2\|\nabla u(t_0)\|_2^2}
<R<T_{\max}-t_0.\]
 For $0\le s\le R$, put $t=t_0+s$ and define
\begin{equation*}
M(s)=\norm{u(t)}_2^2+\int_{t_0}^t\norm{\nabla u(r)}_2^2\,dr
+(R-s)\norm{\nabla u(t_0)}_2^2+\beta(s+\sigma)^2.
\end{equation*}
\emph{Step 1: the concavity functional and its regularity.}
The term $\beta(s+\sigma)^2$ makes $M$ strictly positive.
The product rule used in \eqref{39} and $u\in H^1(t_0,t_0+R;V)$ imply $M\in C^1([0,R])\cap W^{2,1}(0,R)$. In particular,
\begin{equation*}
M'(s)=2\left[(u,u_t)+\int_{t_0}^t(\nabla u,\nabla u_r)\,dr+\beta(s+\sigma)\right].
\end{equation*}
 $M'$ is continuous on $[0,R]$ because
$u\in C([t_0,t_0+R];V)$ and
$u_t\in C([t_0,t_0+R];H)$.
Moreover, the weak product rule gives
\begin{equation}
\label{1}
M''(s)
=2\norm{u_t(t)}_2^2
+2\langle u_{tt}(t),u(t)
+2(\nabla u(t),\nabla u_t(t))
+2\beta
\end{equation}
for almost every $s\in(0,R)$.
All terms belong to $L^1(0,R)$.
Indeed, $u$ is bounded in $V$ on
$[t_0,t_0+R]$, while
$u_t\in L^2(t_0,t_0+R;V)$ and
$u_{tt}\in L^2(t_0,t_0+R;V^*)$.
Thus
$M\in C^1([0,R])\cap W^{2,1}(0,R)$,
as required by Lemma~\ref{concavity}.

Cauchy--Schwarz in the three components indicates
\begin{equation}\label{45}
\begin{split}
\frac14(M'(s))^2
&\le\left[\norm{u(t)}_2^2+\int_{t_0}^t\norm{\nabla u}_2^2\,dr+\beta(s+\sigma)^2\right]
\left[\norm{u_t(t)}_2^2+\int_{t_0}^t\norm{\nabla u_r}_2^2\,dr+\beta\right]\\
&\le M(s)\left[\norm{u_t(t)}_2^2+\int_{t_0}^t\norm{\nabla u_r}_2^2\,dr+\beta\right].
\end{split}
\end{equation}
The last inequality uses \((R-s)\|\nabla u(t_0)\|_2^2\ge0.\)

\emph{Step 2: the concavity inequality.}
Let us combine \eqref{1} and \eqref{39}, then
\begin{equation*}
\begin{split}
M''(s)={}&2\norm{u_t}_2^2-2\left(1-\int_0^t g(r)\,dr\right)\norm{\nabla u}_2^2+2\beta\\
&+2\int_0^t g(t-r)(\nabla u(t),\nabla u(r)-\nabla u(t))\,dr
+2\int_\Omega |u|^{p(x)}\log|u|\,dx.
\end{split}
\end{equation*}
 Applying \eqref{37} and \eqref{40} yields
\begin{equation*}
M''(s)\ge(p_-+2)\norm{u_t}_2^2+
\frac{(p_--1)^2\ell-1}{p_-}\norm{\nabla u}_2^2-2p_-\E(t)+2\beta.
\end{equation*}
By \eqref{dissipation}, $\E(t)\le-\beta-\int_{t_0}^t\norm{\nabla u_r}_2^2\,dr$. Consequently,
\begin{equation}\label{48}
\begin{split}
&M''(s)-(p_-+2)\left[\norm{u_t}_2^2+\int_{t_0}^t\norm{\nabla u_r}_2^2\,dr+\beta\right]\\
&\quad\ge\frac{(p_--1)^2\ell-1}{p_-}\norm{\nabla u}_2^2
+(p_--2)\int_{t_0}^t\norm{\nabla u_r}_2^2\,dr+p_-\beta\ge0.
\end{split}
\end{equation}
Combining \eqref{45} and \eqref{48}, we have 
\begin{equation}\label{50}
M(s)M''(s)-\frac{p_-+2}{4}(M'(s))^2\ge0.
\end{equation}

\emph{Step 3: the finite comparison interval.}
Set $\theta=(p_--2)/4$. Condition~\eqref{42} implies $M'(0)>0$. Direct computation shows 
\[
\frac{M(0)}{\theta M'(0)}=
\frac{2[\norm{u(t_0)}_2^2+R\norm{\nabla u(t_0)}_2^2+\beta\sigma^2]}
{(p_--2)[(u(t_0),u_t(t_0))+\beta\sigma]}<R.
\]
The last inequality is exactly the choice of $R$ above. Since $t_0+R<\Tmax$, the solution supplies a finite positive $M$ on $[0,R]$, contradicting Lemma~\ref{concavity}. This proves \eqref{43}.
\end{proof}

\begin{corollary}\label{optimization}
Under the assumptions of Lemma~\ref{negativelemma}, put
\[
d_{t_0}=(p_--2)(u(t_0),u_t(t_0))-2\norm{\nabla u(t_0)}_2^2.
\]
The fraction in \eqref{43} is minimized at
\begin{equation}\label{51}
\sigma_*=
\frac{\sqrt{d_{t_0}^2+(p_--2)^2\beta\norm{u(t_0)}_2^2}-d_{t_0}}{(p_--2)\beta},
\end{equation}
and
\begin{equation}\label{52}
\Tmax\le t_0+
\frac{4[\sqrt{d_{t_0}^2+(p_--2)^2\beta\norm{u(t_0)}_2^2}-d_{t_0}]}{(p_--2)^2\beta}.
\end{equation}
\end{corollary}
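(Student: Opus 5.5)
The plan is to treat the fraction in \eqref{43} as an explicit one-variable function of $\sigma$ and minimize it by elementary calculus. Set $a=\norm{u(t_0)}_2^2\ge0$, $k=p_--2>0$, and recall $d_{t_0}=k(u(t_0),u_t(t_0))-2\norm{\nabla u(t_0)}_2^2$. The admissibility condition \eqref{42} then reads $d_{t_0}+k\beta\sigma>0$, that is, $\sigma>\sigma_0:=\max\{0,-d_{t_0}/(k\beta)\}$. On this set we study
\[
\phi(\sigma)=\frac{2(a+\beta\sigma^2)}{d_{t_0}+k\beta\sigma}.
\]
By Lemma~\ref{negativelemma}, $\Tmax\le t_0+\phi(\sigma)$ for every admissible $\sigma$. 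Hence it suffices to show two things: that $\sigma_*$ in \eqref{51} is admissible, and that $\phi(\sigma_*)$ equals the fraction in \eqref{52}.

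First, differentiate $\phi$. The sign of $\phi'$ is the sign of $2\beta\sigma(d_{t_0}+k\beta\sigma)-k\beta(a+\beta\sigma^2)$, which equals $\beta(k\beta\sigma^2+2d_{t_0}\sigma-ka)$. The quadratic $k\beta\sigma^2+2d_{t_0}\sigma-ka$ has positive leading coefficient and nonpositive constant term, so it has exactly one nonnegative root. That root is
\[
\sigma_*=\frac{\sqrt{d_{t_0}^2+k^2\beta a}-d_{t_0}}{k\beta}.
\]
Next, check admissibility: $d_{t_0}+k\beta\sigma_*=\sqrt{d_{t_0}^2+k^2\beta a}$. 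This is strictly positive unless $a=0$ and $d_{t_0}\le0$.

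That degenerate case needs a separate look. If $a=0$, then $u(t_0)=0$, so $d_{t_0}=0$. In that case $\phi(\sigma)=2\sigma/k$, every $\sigma>0$ is admissible, and the infimum is $0$, which matches \eqref{52} with right-hand side $t_0$. So the bound still holds, as an infimum over admissible $\sigma$ rather than an attained minimum.

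In the main case $a>0$, we have $\sigma_*>\sigma_0$. Moreover $\phi\to+\infty$ both at $\sigma_0^+$ (or $\phi(0)$ is finite when $\sigma_0=0$ and $\sigma_*>0$) and as $\sigma\to\infty$. The sign analysis of $\phi'$ then shows $\phi$ decreases before $\sigma_*$ and increases after it, so $\sigma_*$ is the global minimizer on the admissible set.

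Finally, evaluate $\phi(\sigma_*)$. Write $S=\sqrt{d_{t_0}^2+k^2\beta a}$. At the critical point, $k\beta\sigma_*^2=ka-2d_{t_0}\sigma_*$, so $a+\beta\sigma_*^2=\sigma_*(d_{t_0}+k\beta\sigma_*)\cdot(2/k)\cdot\tfrac12\cdot$ adjusted; concretely, the critical-point relation $2\beta\sigma_*(d_{t_0}+k\beta\sigma_*)=k\beta(a+\beta\sigma_*^2)$ gives
\[
\phi(\sigma_*)=\frac{4\sigma_*}{k}=\frac{4(S-d_{t_0})}{k^2\beta}.
\]
This is exactly \eqref{52}.

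The only delicate point is bookkeeping: confirming that the minimizer lies in the admissible region and handling the boundary case $u(t_0)=0$. The algebra itself is routine.
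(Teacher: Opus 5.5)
Your argument is essentially the paper's: the same sign analysis of the derivative of the fraction through the quadratic $(p_--2)\beta\sigma^2+2d_{t_0}\sigma-(p_--2)\norm{u(t_0)}_2^2$, the same admissibility check $d_{t_0}+(p_--2)\beta\sigma_*=\sqrt{d_{t_0}^2+(p_--2)^2\beta\norm{u(t_0)}_2^2}>0$, and the same critical-point identity giving the value $4\sigma_*/(p_--2)$. The one thing to fix is the case $u(t_0)=0$, which is impossible rather than degenerate: there $\E(t_0)=\frac12\norm{u_t(t_0)}_2^2+\frac12(g\circ\nabla u)(t_0)+C_*/p_-\ge0$ contradicts $\E(t_0)=-\beta<0$ (your own computation shows the same, since it gives $\Tmax\le t_0<\Tmax$), so you should say the case is vacuous instead of conceding an unattained infimum, which would contradict the corollary's claim that the minimum is attained at $\sigma_*$.
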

\begin{proof}
If $u(t_0)=0$, the elastic and potential terms vanish and the shifted energy
\[\mathcal E(t_0)
=\frac12\|u_t(t_0)\|_2^2
+\frac12(g\circ\nabla u)(t_0)
+\frac{C_*}{p_-}\ge0,\]
contradicting $\E(t_0)<0$. Hence $U_{t_0}:=\norm{u(t_0)}_2^2>0$. For admissible $\sigma$, set
\[
B(\sigma):=\frac{2(U_{t_0}+\beta\sigma^2)}{d_{t_0}+(p_--2)\beta\sigma},\quad
B'(\sigma)=\frac{2\beta[(p_--2)\beta\sigma^2+2d_{t_0}\sigma-(p_--2)U_{t_0}]}
{[d_{t_0}+(p_--2)\beta\sigma]^2}.
\]
Obviously, $(p_--2)\beta\sigma^2+2d_{t_0}\sigma-(p_--2)U_{t_0}=0$ has exactly one positive root, namely \eqref{51}. At that root,
\[d_{t_0}+(p_--2)\beta\sigma_* =\sqrt{d_{t_0}^2+(p_--2)^2\beta U_{t_0}}>|d_{t_0}|>0,\]
 so it is admissible. On the admissible interval
$\{\sigma>0:d_{t_0}+(p_--2)\beta\sigma>0\}$, we find
 \[B'(\sigma)<0\quad\text{for }0<\sigma<\sigma_*,
\quad
B'(\sigma)>0\quad\text{for }\sigma>\sigma_*.\]
Thus the root is the unique minimizer. Using \eqref{51}, then 
\[
U_{t_0}+\beta\sigma_*^2
=\frac{2\sigma_*}{p_--2}[d_{t_0}+(p_--2)\beta\sigma_*],\]
further,
\[B(\sigma_*)=\frac{4\sigma_*}{p_--2}=
\frac{4[\sqrt{d_{t_0}^2+(p_--2)^2\beta\norm{u(t_0)}_2^2}-d_{t_0}]}{(p_--2)^2\beta},
\]
which verifies \eqref{52}.
\end{proof}

The next estimate is the quantitative step needed when the shifted initial energy is nonnegative. The bounds on $[0,t_\beta]$ follow from the dissipation inequality and do not require monotonicity of $\|u(t)\|_2$.
\begin{lemma}\label{stoppinglemma}
Assume \textup{(H1)--(H2)} and \eqref{ks}. Suppose $\E(0)\ge0$ and $L_0>0$, with $L_0$ and $\beta$ as in \eqref{32}. Let $\tau$, $U_\tau$, and $G_\tau$ be given by \eqref{33} and \eqref{34}. Then either $\Tmax\le\tau$, or there is $t_\beta\in(0,\tau)$ such that
\begin{equation}\label{53}
\begin{gathered}
\E(t_\beta)=-\beta,\quad (u(t_\beta),u_t(t_\beta))\ge L_0/2,\\
\norm{u(t_\beta)}_2^2\le U_\tau,\quad
\norm{\nabla u(t_\beta)}_2^2\le G_\tau.
\end{gathered}
\end{equation}
\end{lemma}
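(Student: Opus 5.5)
The plan is to combine the exponential growth of Lemma~\ref{growthlemma} with the dissipation inequality \eqref{dissipation}, and to stop the solution at the first time the shifted energy reaches $-\beta$. Assume $\Tmax>\tau$; we show the second alternative holds. Since $\E$ is continuous and nonincreasing on $[0,\Tmax)$ and $\E(0)\ge0>-\beta$, set
\[
t_\beta:=\inf\{t\in[0,\Tmax):\E(t)\le-\beta\},
\]
with $t_\beta=\Tmax$ if the set is empty. Then $t_\beta>0$. If $t_\beta<\Tmax$, continuity gives $\E(t_\beta)=-\beta$. The goal is to prove $t_\beta<\tau$ and then read off the bounds \eqref{53} on $[0,t_\beta]$.

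\emph{Estimates on $[0,t_\beta]$.} For $0\le t\le t_\beta$ with $t<\Tmax$ we have $\E(t)\ge-\beta$. Lemma~\ref{growthlemma} with $L(0)=L_0$ gives
\[
(u(t),u_t(t))\ge L_0e^{Ct}+\frac{p_-}{C}\E(t)\ge L_0e^{Ct}-\frac{p_-\beta}{C}=L_0e^{Ct}-\frac{L_0}{2},
\]
using $\beta=CL_0/(2p_-)$. In particular $(u,u_t)\ge L_0/2$. Also \eqref{dissipation} with $s=0$ gives $\int_0^t\norm{\nabla u_r}_2^2\,dr\le\E(0)+\beta$. Writing $u(t)=u_0+\int_0^tu_r\,dr$ and applying Minkowski, Cauchy--Schwarz in time and \eqref{poincare}, we get
\[
\norm{\nabla u(t)}_2\le\norm{\nabla u_0}_2+\sqrt{t(\E(0)+\beta)},\qquad
\norm{u(t)}_2\le\norm{u_0}_2+\sqrt{\frac{t(\E(0)+\beta)}{\lambda_1}}.
\]
For $t\le\tau$ these are the bounds by $G_\tau$ and $U_\tau$. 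Hence \eqref{53} follows once $t_\beta<\tau$ is known.

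\emph{Showing $t_\beta<\tau$ (main step).} Suppose instead $t_\beta\ge\tau$; since $\tau<\Tmax$, the estimates above hold on $[0,\tau]$. We compare a lower and an upper bound for $\norm{u(\tau)}_2^2$.

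For the lower bound, use $\frac{d}{dt}\norm{u}_2^2=2(u,u_t)$ and $e^{Ct}>1+Ct$ for $t>0$. This gives $2(u,u_t)>L_0+2CL_0t$, and integrating,
\[
\norm{u(\tau)}_2^2>\norm{u_0}_2^2+L_0\tau+CL_0\tau^2.
\]
For the upper bound, use $\norm{u(\tau)}_2^2\le2\norm{u_0}_2^2+2\norm{u(\tau)-u_0}_2^2$. Bounding the second term by $\tau\int_0^\tau\norm{u_r}_2^2\,dr\le\tau(\E(0)+\beta)/\lambda_1$ gives
\[
\norm{u(\tau)}_2^2\le2\norm{u_0}_2^2+\frac{2\tau(\E(0)+\beta)}{\lambda_1}.
\]
By \eqref{33}, the two right-hand sides coincide: $CL_0\tau^2+L_0\tau=\norm{u_0}_2^2+2\tau(\E(0)+\beta)/\lambda_1$. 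The strict inequality therefore yields a contradiction, so $t_\beta<\tau$. In particular the set defining $t_\beta$ is nonempty and $\E(t_\beta)=-\beta$.

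The delicate point is this step. One must make sure the quadratic in \eqref{33} matches exactly the crude upper bound $2\norm{u_0}_2^2+2\norm{u-u_0}_2^2$ rather than the sharper one defining $U_\tau$. One must also check that strictness, coming from $e^{C\tau}>1+C\tau$, gives $t_\beta<\tau$ and not merely $t_\beta\le\tau$. Finally, $\tau$ is the unique positive root because the quadratic has positive leading coefficient and value $-\norm{u_0}_2^2\le0$ at $0$. If $u_0=0$, the root $0$ is excluded, and the positive root equals $[2(\E(0)+\beta)/\lambda_1-L_0]/(CL_0)$. This must be positive for $\tau$ to be well defined, which is implicit in the statement.
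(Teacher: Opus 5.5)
Your proof is correct and follows the paper's argument essentially step for step. It uses the same growth-lemma lower bound for $\norm{u(\tau)}_2^2$ and the same crude upper bound via $(a+b)^2\le 2a^2+2b^2$, which \eqref{33} matches exactly; the only change is that your first-hitting-time definition of $t_\beta$ replaces the paper's intermediate-value-plus-monotonicity step. Your closing caveat about $u_0=0$ is unnecessary: the hypotheses give $(u_0,u_1)=L_0+\frac{p_-}{C}\E(0)\ge L_0>0$, so $u_0\neq0$, and $\tau$ is automatically the unique positive root.
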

\begin{proof}
Suppose $\Tmax>\tau$. Since $\E(0)\ge0$ and $L_0>0$, we have $(u_0,u_1)
=L_0+\frac{p_-}{C}\mathcal E(0)>0$, and hence $u_0\ne0$. Obviously, \eqref{33}  has exactly one positive root.

\emph{Step 1: }
For any $t\le\tau$ for which $\E(s)\ge-\beta$ on $[0,t]$, \eqref{dissipation} directly illustrates 
\begin{equation*}
\int_0^t\norm{\nabla u_s}_2^2\,ds\le\E(0)-\E(t)\le\E(0)+\beta.
\end{equation*}
The representation $u(t)=u_0+\int_0^t u_s\,ds$, Poincar\'e's inequality \eqref{poincare}, and Cauchy--Schwarz imply
\begin{equation}\label{54}
\begin{split}
\norm{u(t)}_2&\le\|u_0\|_2+\int_0^t\|u_s(s)\|_2\,ds\le\|u_0\|_2+
\sqrt{\frac t{\lambda_1}}
\left(\int_0^t\|\nabla u_s(s)\|_2^2\,ds\right)^{1/2}\\
&\le\|u_0\|_2+
\sqrt{\frac{t(\mathcal E(0)+\beta)}{\lambda_1}}
,\\
\norm{\nabla u(t)}_2&\le\|\nabla u_0\|_2+
\int_0^t\|\nabla u_s(s)\|_2\,ds
\le\norm{\nabla u_0}_2+
\sqrt{t(\E(0)+\beta)}.
\end{split}
\end{equation}

\emph{Step 2: }
Suppose, for a contradiction, that $\E(s)\ge-\beta$ for all $s\in[0,\tau]$. Lemma~\ref{growthlemma} gives
\[
\frac d{dt}\norm{u(t)}_2^2
=2L(t)+\frac{2p_-}{C}\E(t)
\ge2L_0e^{Ct}-\frac{2p_-\beta}{C}=2L_0e^{Ct}-L_0.
\]
Therefore
\begin{equation*}
\norm{u(t)}_2^2\ge\norm{u_0}_2^2+\frac{2L_0}{C}(e^{Ct}-1)-L_0t.
\end{equation*}
Using $e^{Ct}-1>Ct+(Ct)^2/2$ for $t>0$, then
\[
\norm{u(\tau)}_2^2>
\norm{u_0}_2^2+L_0\tau+CL_0\tau^2
=2\norm{u_0}_2^2+\frac{2\tau(\E(0)+\beta)}{\lambda_1},
\]
where the equality follows from \eqref{33}. In contrast, squaring the first inequality in \eqref{54} and using $(a+b)^2\le2a^2+2b^2$ yields
\[
\norm{u(\tau)}_2^2\le
2\norm{u_0}_2^2+\frac{2\tau(\E(0)+\beta)}{\lambda_1}.
\] 
This is a contradiction.

\emph{Step 3: }
The contradiction in Step~2 shows that
$\mathcal E(s_0)<-\beta$ for some $s_0\in(0,\tau]$.
If $s_0=\tau$, continuity gives the same strict inequality
at a time slightly earlier than $\tau$.
Thus there exists $s_1\in(0,\tau)$ such that
$\mathcal E(s_1)<-\beta$.
Since $\mathcal E(0)\ge0\ge-\beta$, the intermediate value theorem
provides $t_\beta\in(0,s_1)$ with
$\mathcal E(t_\beta)=-\beta$.

The energy is nonincreasing, so
$\mathcal E(t)\ge\mathcal E(t_\beta)=-\beta$
for every $t\in[0,t_\beta]$.
Hence \eqref{54} applies at $t_\beta$.
Using $t_\beta<\tau$ and \eqref{34}, we obtain
\[
\begin{aligned}
\|u(t_\beta)\|_2^2
&\le
\left(
\|u_0\|_2+
\sqrt{\frac{t_\beta(\mathcal E(0)+\beta)}{\lambda_1}}
\right)^2
\le U_\tau,\\
\|\nabla u(t_\beta)\|_2^2
&\le
\left(
\|\nabla u_0\|_2+
\sqrt{t_\beta(\mathcal E(0)+\beta)}
\right)^2
\le G_\tau.
\end{aligned}
\]
Moreover, the definition of $L$, Lemma~\ref{growthlemma},
and \eqref{32} establish 
\[
\begin{aligned}
(u(t_\beta),u_t(t_\beta))=L(t_\beta)+\frac{p_-}{C}\mathcal E(t_\beta)
=L(t_\beta)-\frac{p_-\beta}{C}\ge L_0e^{Ct_\beta}-\frac{L_0}{2}
\ge\frac{L_0}{2}.
\end{aligned}
\]
Together with $\mathcal E(t_\beta)=-\beta$ and
$0<t_\beta<\tau$, these estimates prove \eqref{53}.\end{proof}

To show that the blow-up criterion can be satisfied at every prescribed initial energy level, we need to construct suitable initial data. We choose the initial displacement in the form $u_0=s\varphi$ and adjust the initial velocity to attain the prescribed energy. This construction requires the source potential along $s\varphi$ to grow faster than the quadratic elastic energy. The following lemma establishes this property and also allows the initial displacement--velocity pairing to become arbitrarily large while the initial energy remains fixed.
\begin{lemma}\label{raylemma}
There exists $\varphi\in C_c^\infty(\Omega)$, $0\le\varphi\le1$, such that
\begin{equation}\label{55}
\frac1{s^2}\int_\Omega F(x,s\varphi)\,dx\longrightarrow\infty
\quad\text{as }s\to\infty.
\end{equation}
\end{lemma}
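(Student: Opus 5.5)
We choose $\varphi\in C_c^\infty(\Omega)$ with $0\le\varphi\le1$ and $\varphi\equiv1$ on a closed ball $B\subset\Omega$ of positive measure. We then split $\int_\Omega F(x,s\varphi)\,dx$ into the contribution of $B$ and that of $\Omega\setminus B$, and show that the first grows like $s^{p_-}\log s$ while the second is bounded below by a constant independent of $s$. Since $p_->2$, dividing by $s^2$ then gives \eqref{55}.

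\emph{Lower bound on the complement.} For fixed $q\in[p_-,p_+]$ consider
\[
h_q(r)=\frac{r^q}{q}\log r-\frac{r^q}{q^2},\qquad r\ge0 .
\]
This function is continuous on $[0,\infty)$, equals $0$ at $r=0$, and tends to $+\infty$ as $r\to\infty$. Its derivative is $r^{q-1}\log r$, so its minimum is attained at $r=1$, where $h_q(1)=-1/q^2\ge-1/p_-^2$. Hence $F(x,r)\ge-1/p_-^2$ for a.e.\ $x$ and all $r$, uniformly in $x$. This uniformity is where measurability alone of $p$ suffices, and the bound gives
\[
\int_{\Omega\setminus B}F(x,s\varphi)\,dx\ge-\frac{|\Omega|}{p_-^2}.
\]

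\emph{Growth on the ball.} On $B$ we have $F(x,s\varphi)=F(x,s)$. For $s\ge e$ and $q=p(x)\in[p_-,p_+]$,
\[
F(x,s)=\frac{s^q}{q}\Bigl(\log s-\frac1q\Bigr)\ge\frac{s^{q}}{p_+}\Bigl(\log s-\frac1{p_-}\Bigr)\ge\frac{s^{p_-}}{p_+}\Bigl(\log s-\frac12\Bigr).
\]
Here we used $s^q\ge s^{p_-}$ for $s\ge1$, $1/q\le1/p_-<1/2$, and the fact that $\log s-\tfrac12>0$. Integrating over $B$ gives
\[
\int_B F(x,s)\,dx\ge\frac{|B|}{p_+}\,s^{p_-}\Bigl(\log s-\frac12\Bigr).
\]

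\emph{Conclusion.} Combining the two pieces,
\[
\frac1{s^2}\int_\Omega F(x,s\varphi)\,dx\ge\frac{|B|}{p_+}\,s^{p_--2}\Bigl(\log s-\frac12\Bigr)-\frac{|\Omega|}{p_-^2s^2}\longrightarrow\infty,
\]
since $p_->2$. The only point needing care is the uniform lower bound on $\Omega\setminus B$, where $s\varphi$ ranges over all of $[0,s]$ and the integrand can be negative. That bound is supplied by the elementary minimization of $h_q$. Because $F$ is bounded below and $\Omega$ is bounded, all integrals are well defined.
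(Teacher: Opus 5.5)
Your proof is correct and takes the same approach as the paper: a cutoff $\varphi\equiv1$ on a ball, the uniform bound $F(x,r)\ge-1/p_-^2$ from minimizing $h_q$ at $r=1$, and $s^{p_-}\log s$ growth on the ball divided by $s^2$. The only differences are cosmetic: you use the threshold $s\ge e$ where the paper uses $s>e^{1/p_-}$, and you weaken $1/p_-$ to $1/2$ in the bracket.
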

\begin{proof}
Choose $\varphi=1$ on a ball $A$ compactly contained in $\Omega$. For fixed $q\ge p_-$, the potential $|r|^q\log|r|/q-|r|^q/q^2$ is minimized at $|r|=1$, with value $-1/q^2$. Thus $F(x,r)\ge-1/p_-^2$. For $s>e^{1/p_-}$ and $x\in A$,
\[
F(x,s)=\frac{s^{p(x)}}{p(x)}\left(\log s-\frac1{p(x)}\right)
\ge\frac{s^{p_-}}{p_+}\left(\log s-\frac1{p_-}\right).
\]
Consequently,
\[
\int_\Omega F(x,s\varphi)\,dx=\int_A F(x,s)\,dx
+\int_{\Omega\setminus A}F(x,s\varphi)\,dx
\ge\frac{|A|s^{p_-}}{p_+}\left(\log s-\frac1{p_-}\right)-\frac{|\Omega|}{p_-^2}.
\]
Division by $s^2$ and $p_->2$ prove \eqref{55}.
\end{proof}

\section{Blow-up bounds and constructions of initial data}\label{proofs}

\begin{proof}[\textbf{Proof of Theorem~\ref{blowup}}]
In case \textup{(i)}, Lemma~\ref{negativelemma} at $t_0=0$ gives \eqref{30}, and Corollary~\ref{optimization} gives \eqref{31}.

For cases \textup{(ii)} and \textup{(iii)}, one has $\E(0)\ge0$ and $L_0>0$. If $\Tmax\le\tau$, \eqref{35} holds because its added term is positive. Otherwise Lemma~\ref{stoppinglemma} supplies $t_\beta<\tau$ satisfying \eqref{53}.
For every $\sigma>0$ with $d_\tau+(p_--2)\beta\sigma>0$, these estimates reveal
\begin{equation}\label{aa}
\begin{split}
&(p_--2)\bigl[(u(t_\beta),u_t(t_\beta))+\beta\sigma\bigr]
-2\norm{\nabla u(t_\beta)}_2^2\\
&\quad\ge\frac{p_--2}{2}L_0-2G_\tau+(p_--2)\beta\sigma
=d_\tau+(p_--2)\beta\sigma>0.
\end{split}
\end{equation}
Thus $\sigma$ is admissible in Lemma~\ref{negativelemma}. Applying Lemma~\ref{negativelemma} at $t_0=t_\beta$,
using $t_\beta<\tau$ and
$\norm{u(t_\beta)}_2^2\le U_\tau$, and
in \eqref{aa}, we obtain
\begin{equation}\label{aa1}
\begin{aligned}
\Tmax
&\le t_\beta+
\frac{2\bigl(\norm{u(t_\beta)}_2^2+\beta\sigma^2\bigr)}
{(p_--2)\bigl[(u(t_\beta),u_t(t_\beta))+\beta\sigma\bigr]
-2\norm{\nabla u(t_\beta)}_2^2}\\
&\le\tau+
\frac{2(U_\tau+\beta\sigma^2)}
{d_\tau+(p_--2)\beta\sigma}.
\end{aligned}
\end{equation}
Applying
Corollary~\ref{optimization} with $U_{t_0}$ replaced by $U_\tau>0$
and $d_{t_0}$ replaced by $d_\tau$, we minimize the right-hand side
of \eqref{aa1} by taking
\[
\sigma=
\frac{\sqrt{d_\tau^2+(p_--2)^2\beta U_\tau}-d_\tau}
{(p_--2)\beta}>0.
\]
This choice is admissible because
\[
d_\tau+(p_--2)\beta\sigma
=\sqrt{d_\tau^2+(p_--2)^2\beta U_\tau}>0.
\]
Substituting this value into \eqref{aa1}
gives \eqref{35}.
Since the resulting upper bound is finite,
we conclude that $\Tmax<\infty$.
In all three cases, the 28 criterion
in Theorem~\ref{local} yields \eqref{28}.

\end{proof}

\begin{proof}[\textbf{Proof of Theorem~\ref{prescribed}}]
Let $\varphi$ be as in Lemma~\ref{raylemma} and fix $R\in\R$. For large $s$, define
\begin{equation}\label{aa2}
b_s=\left[\frac{2}{\norm\varphi_2^2}
\left(R+\int_\Omega F(x,s\varphi)\,dx-\frac{s^2}{2}\norm{\nabla\varphi}_2^2\right)\right]^{1/2},
\quad u_0=s\varphi,\quad u_1=b_s\varphi.
\end{equation}
By \eqref{55}, one has
\[\frac{2}{\norm\varphi_2^2}
\left(R+\int_\Omega F(x,s\varphi)\,dx-\frac{s^2}{2}\norm{\nabla\varphi}_2^2\right)>0\]
For $t=0$, we have
\[
E(0)=\frac{b_s^2}{2}\norm\varphi_2^2+
\frac{s^2}{2}\norm{\nabla\varphi}_2^2-\int_\Omega F(x,s\varphi)\,dx=R.
\]
Moreover,
\[
\frac{b_s^2}{s^2}=\frac{2}{\norm\varphi_2^2}
\left[\frac R{s^2}+\frac1{s^2}\int_\Omega F(x,s\varphi)\,dx-
\frac12\norm{\nabla\varphi}_2^2\right]\longrightarrow\infty\quad\text{as }s\to\infty.
\]
Thus $b_s/s\to\infty$ and $(u_0,u_1)=s b_s\norm\varphi_2^2\to\infty$  as $s\to \infty$, whereas $\E(0)=R+C_*/p_-$ is fixed. If $\mathcal E(0)<0$, Theorem~\ref{blowup}\textup{(i)}
applies without any restriction on $(u_0,u_1)$. If $\mathcal E(0)=0$, then
$(u_0,u_1)=sb_s\|\varphi\|_2^2>0$,
so Theorem~\ref{blowup}\textup{(ii)} applies. If $\mathcal E(0)>0$, then $(u_0,u_1)\to\infty$ as
$s\to\infty$, whereas $\mathcal E(0)$ remains fixed.
Hence, for sufficiently large $s$,
$
0<\mathcal E(0)<\frac{C}{p_-}(u_0,u_1),
$
and Theorem~\ref{blowup}\textup{(iii)} applies.
\end{proof}

The preceding construction fixes the energy. The next one lets it increase and demonstrates that removing \eqref{3} enlarges the class covered by the upper lifespan estimate.
\begin{proposition}[Data outside the additional lifespan restriction]\label{comparison}
Let $p(x)\equiv p$ with $2<p<2^*$, and assume \textup{(H2)} and \eqref{ks}. There are smooth compactly supported data with arbitrarily large initial energy satisfying
\begin{equation}\label{aa3}
\frac{C}{2p}\norm{u_0}_2^2<E(0)<\frac Cp(u_0,u_1).
\end{equation}
The solutions satisfy the upper bound \eqref{35}. In the common exponent range of the two papers, these data violate the additional hypothesis in \cite[Theorem~3.2]{Liao} while satisfying its high-energy criterion.
\end{proposition}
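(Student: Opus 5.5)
Since $p$ is constant, $C_*=0$ and $\E=E$, so Theorem~\ref{blowup}(iii) applies whenever $0<E(0)<\frac Cp(u_0,u_1)$ and gives \eqref{35}. It therefore suffices to build data with $E(0)$ arbitrarily large that satisfy \eqref{aa3}. Since $E(0)>\frac{C}{2p}\norm{u_0}_2^2>0$, such data satisfy the high-energy criterion \eqref{2} but violate \eqref{3}. I would reuse the ray construction from Lemma~\ref{raylemma} and the proof of Theorem~\ref{prescribed}: take $\varphi\in C_c^\infty(\Omega)$ with $0\le\varphi\le1$, and set $u_0=s\varphi$, $u_1=b\varphi$ with $s,b>0$ as free parameters. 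Then
\[
E(0)=\frac{b^2}{2}\norm{\varphi}_2^2+\frac{s^2}{2}\norm{\nabla\varphi}_2^2-\int_\Omega F(x,s\varphi)\,dx,\qquad (u_0,u_1)=sb\norm{\varphi}_2^2,\qquad \norm{u_0}_2^2=s^2\norm{\varphi}_2^2 .
\]

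The key step is to pick the energy level as a function of $s$, namely $E(0)=R_s:=\kappa s^2$, with $\kappa>\frac{C}{2p}\norm{\varphi}_2^2$ fixed. The left inequality of \eqref{aa3} then holds automatically, and $E(0)\to\infty$ as $s\to\infty$. As in \eqref{aa2}, I would define
\[
b_s^2=\frac{2}{\norm\varphi_2^2}\Bigl(\kappa s^2+\int_\Omega F(x,s\varphi)\,dx-\frac{s^2}{2}\norm{\nabla\varphi}_2^2\Bigr).
\]
By \eqref{55}, the bracket divided by $s^2$ tends to $+\infty$. So $b_s^2>0$ for large $s$, and $b_s/s\to\infty$.

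The right inequality of \eqref{aa3} reads $\kappa s^2<\frac Cp s b_s\norm\varphi_2^2$, i.e. $\kappa<\frac Cp\norm\varphi_2^2\, b_s/s$. This holds for all large $s$ because $b_s/s\to\infty$, while $\kappa$ is fixed. This is the only point needing care: the energy must grow like $s^2$, so that it beats $\frac{C}{2p}\norm{u_0}_2^2$, but no faster than $s b_s$. The superquadratic growth in \eqref{55} leaves room for this, since $s b_s\gg s^2$.

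Finally, I would note that the data are smooth and compactly supported, that $E(0)=\kappa s^2$ is arbitrarily large, and that Theorem~\ref{blowup}(iii), with $\E=E$ by Remark~\ref{constantcase}, gives $\Tmax<\infty$ and the bound \eqref{35}. For the comparison with \cite[Theorem~3.2]{Liao}: by Remark~\ref{constantcase} the constant $C$ coincides with the one there, so in the common exponent range these data satisfy \eqref{2} and violate \eqref{3}.
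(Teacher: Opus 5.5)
Your proposal is correct and follows the paper's proof essentially step for step. It uses the same ray data $u_0=s\varphi$, $u_1=b_s\varphi$ with the energy level $R(s)=\kappa s^2$ and $\kappa>C\norm{\varphi}_2^2/(2p)$. The left inequality of \eqref{aa3} comes from the choice of $\kappa$, the right one from $b_s/s\to\infty$, and the conclusion from Theorem~\ref{blowup}(iii) with $C_*=0$.
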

\begin{proof}

Let $\varphi$ be as in Lemma~\ref{raylemma} and choose
$\kappa>C\norm\varphi_2^2/(2p)$. In \eqref{aa2}, replace the fixed $R$ by $R(s)=\kappa s^2$. For large $s$,
\[\frac{2}{\norm\varphi_2^2}
\left(\kappa s^2+\int_\Omega F(x,s\varphi)\,dx-\frac{s^2}{2}\norm{\nabla\varphi}_2^2\right)>0.\]
The same calculation as in the proof of Theorem~\ref{prescribed} yields $E(0)=\kappa s^2\to\infty$ as $s\to \infty$. Also,
\[
\frac{b_s^2}{s^2}=\frac{2}{\norm\varphi_2^2}
\left[\kappa+\frac1{s^2}\int_\Omega F(s\varphi)\,dx-
\frac12\norm{\nabla\varphi}_2^2\right]\longrightarrow\infty\quad\text{as }s\to\infty.
\]
Consequently,
\[
\frac{E(0)}{(u_0,u_1)}=
\frac{\kappa s}{b_s\norm\varphi_2^2}\longrightarrow0\quad\text{as }s\to\infty,
\]
which proves the right inequality in \eqref{aa3} for large $s$. The choice of $\kappa$ yields the left inequality for every such $s$. Since $C_*=0$, Theorem~\ref{blowup}\textup{(iii)} applies.
\end{proof}


\section{Numerical examples}\label{numerical}
In this section, we consider three examples corresponding to the
shifted initial energies $-1$, $0$, and $1$.
The finite-time blow-up follows from
Theorem~\ref{blowup}.

Let us choose
\[
\Omega=(0,\pi),\quad
p(x)=3+\sin^2x,\quad
g(t)=\frac15e^{-t},
\]
with $u(0,t)=u(\pi,t)=0$. Then
\[
p_-=3,\quad p_+=4,\quad
\lambda_1=1,\quad
\ell=1-\int_0^\infty g(s)\,ds=\frac45.
\]
In particular, we have
\[
\ell=\frac45>\frac1{(p_--1)^2}=\frac14,
\quad
C=\min\left\{
5,\frac{2[4(4/5)-1]}{7}
\right\}=\frac{22}{35}.
\]
Thus the strict kernel condition holds,
and \textup{(H1)} is obviously satisfied for $n=1$.

Choose
\[
u_0(x)=3\sin x,\quad u_1(x)=B\sin x,
\]
where $B>0$ is determined by the desired initial
shifted energy.  It is clear that $(u_0(x),u_1(x))\in H_0^1(0,\pi)\times L^2(0,\pi)$ satisfies the boundary conditions.
Let us write
$P=\int_0^\pi F(x,3\sin x)\,dx$, then
\[
\mathcal E(0)
=\frac12\|u_1\|_2^2
+\frac12\|\partial_xu_0\|_2^2
-\int_0^\pi F(x,u_0)\,dx+\frac{C_*}{3}
=\frac{\pi B^2}{4}+\frac{9\pi}{4}
-P+\frac{C_*}{3}.
\]
For each $\delta\in\{-1,0,1\}$, set
\begin{equation}\label{n1}
B=\left[
\frac4\pi\left(\delta-\frac{C_*}{3}+P-\frac{9\pi}{4}\right)
\right]^{1/2},
\end{equation}
which gives $\mathcal E(0)=\delta$.

Numerical quadrature shows
\[
\begin{aligned}
P
&=\int_0^\pi
(3\sin x)^{p(x)}
\left[
\frac{\log(3\sin x)}{p(x)}
-\frac{1}{p(x)^2}
\right]\,dx
\approx16.0087328718,\\
C_*
&=\int_0^\pi
\frac{\sin^2x}{p(x)^2}
\exp\left(-\frac{p(x)}{\sin^2x}\right)\,dx
\approx0.000973763791,
\end{aligned}
\]where the integrands are extended continuously
by zero at the endpoints. The initial data satisfy the blow-up criteria
independently of these quadrature approximations.
Indeed, $F(x,r)\ge-1/9$ for all $r$, while
\[
F(x,r)\ge\frac{r^{p(x)}}4
\left(\log r-\frac13\right)
\quad\text{for }r>e^{1/3}.
\]
Applying this estimate on
$[\pi/3,5\pi/12]$, $[5\pi/12,\pi/2]$,
and their reflections about $\pi/2$ indicates
\[
\begin{aligned}
P\ge{}&\frac{\pi}{24}
\left\{
\left(3\sin\frac\pi3\right)^{15/4}
\left[\log\left(3\sin\frac\pi3\right)-\frac13\right]
\right.\\
&\left.\quad+
\left(3\sin\frac{5\pi}{12}\right)^{(14+\sqrt3)/4}
\left[\log\left(3\sin\frac{5\pi}{12}\right)-\frac13\right]
\right\}
-\frac{2\pi}{27}>8.96.
\end{aligned}
\]
Moreover, using Corollary~\ref{correctionsize} yields
$C_*/3\le\pi e^{-4}/27<0.0022$. Therefore, the positivity of the radicand in \eqref{n1} is fulfilled for all three choices of $\delta$.
The negative shifted-energy criterion applies when $\delta=-1$,
and the zero shifted-energy criterion applies when $\delta=0$
because $(u_0,u_1)=3\pi B/2>0$.
For $\delta=1$, the bounds $P>8.96$ and $C_*/3<0.0022$
in \eqref{n1} yield $B>1.9$, thus
\[
0<\mathcal E(0)=1
<\frac C3(u_0,u_1)=\frac{11\pi B}{35}.
\]
By Theorem~\ref{blowup},  we verify that all three continuous solutions blow up in
finite time.

Using
\[
\|u_0\|_2^2=\|\partial_xu_0\|_2^2=\frac{9\pi}{2},
\quad (u_0,u_1)=\frac{3\pi B}{2},
\]
we evaluate \eqref{31} for $\delta=-1$
and \eqref{35} for $\delta=0,1$.
Table~\ref{numericaldata} lists the initial parameters
and the resulting theoretical upper lifespan bounds.

\begin{table}[htbp]
\centering
\caption{Initial parameters and numerical evaluations
of the theoretical upper lifespan bounds.}
\label{numericaldata}
\label{numericalupperbounds}
\begin{tabular}{c c c c c}
\hline
Case & $\mathcal E(0)$ & $B$ & $(u_0,u_1)$
& Upper lifespan bound\\
\hline
\textup{(i)}
& $-1$ & $3.17951237$ & $14.98309907$ & $108.41623$\\
\textup{(ii)}
& $0$ & $3.37380178$ & $15.89866635$ & $189.92038$\\
\textup{(iii)}
& $1$ & $3.55749603$ & $16.76430510$ & $312.54329$\\
\hline
\end{tabular}
\end{table}

\vspace{1em}
\noindent\textbf{Declaration of competing interest}

The authors declare that they have no known competing financial interests or personal relationships that could have appeared to influence the work reported in this paper.

\vspace{1em}
\noindent\textbf{Data availability}

No data was used for the research described in the article.

\end{document}